\documentclass[12pt]{amsart}
\usepackage{amscd,amssymb,epsfig}
\usepackage{graphics}

\input epsf
\newtheorem{theorem}{Theorem}[section]
\newtheorem{lemma}[theorem]{Lemma}
\newtheorem{proposition}[theorem]{Proposition}
\newtheorem{corollary}[theorem]{Corollary}
\theoremstyle{definition}
\newtheorem{definition}[theorem]{Definition}

\theoremstyle{remark}

\numberwithin{figure}{section}
\numberwithin{table}{section}

\def\ra{{\rightarrow}}

\def\lra{{\leftrightarrow}}
\def\hra{{\hookrightarrow}}
\newcommand{\xra}{\xrightarrow}

\def\omp{{\omega'_h}}

\def\Poin{{Poincar\'e }}

\def\bZ{{\mathbb Z}}
\def\bQ{{\mathbb Q}}

\def\S1g{{\Sigma_{g,1}}}
\def\M1g{{\mathcal M}_{g,1}}
\def\I1g{{\mathcal I}_{g,1}}

\def\tG{\tau^{GG'}}
\def\btG{\bar \tau^{GG'}}

\def\El{\mathcal L}
\def\half{{\frac 1 2}}
\def\sixth{{\frac 1 6 }}
\newcommand{\mc}{\mathcal}

\newcommand{\mb}{\mathbf}
\newcommand{\ThG}[3]{\theta_{#1}^{#2}(\mb{#3})}
\newcommand{\f}{\mb{f}}
\newcommand\Hom{\operatorname{Hom}}

\newcommand\Aut{\operatorname{Aut}}

\newcommand\Ker{\operatorname{Ker}}

\newcommand\im{\operatorname{Im}}

\newcommand\Diff{\operatorname{Diff}}

\catcode`\@=\active 

\begin{document}

\title[Canonical lifts of the Johnson homomorphisms]
{Canonical lifts of the Johnson homomorphisms to the Torelli groupoid}
\author{Alex James Bene}
\address{Department of Mathematics\\
University of Southern California\\
Los Angeles, CA 90089\\
USA}
\email{bene{\char'100}usc.edu}
\author{Nariya  Kawazumi}
\address{Department of Mathematical Sciences\\
University of Tokyo \\Komaba, Tokyo 153-8914\\
Japan}
\email{kawazumi{\char'100}ms.u-tokyo.ac.jp}
\author{R. C.  Penner}
\address{Departments of Mathematics and Physics/Astronomy\\
University of Southern California\\
Los Angeles, CA 90089\\
USA}
\email{rpenner{\char'100}math.usc.edu}
\keywords{mapping class group, Torelli group, Johnson homomorphism,
moduli space of curves}

\thanks{The first author thanks Dylan Thurston for useful discussions, 
and the latter two authors thank CTQM and Aarhus University for kind hospitality}

\begin{abstract}
We prove that every trivalent marked bordered fatgraph comes equipped with a canonical generalized Magnus expansion in the sense of Kawazumi.   This  Magnus expansion is used to give canonical lifts
of the higher Johnson homomorphisms $\tau_m$, for $m\geq 1$, to the Torelli groupoid, and we provide a recursive combinatorial formula for tensor representatives of these lifts.  In particular,  we give an explicit 
1-cocycle in the dual fatgraph complex which lifts $\tau_2$ and thus answer  affirmatively  a question of  Morita-Penner.  To illustrate our techniques for calculating higher Johnson homomorphisms in general, we give explicit examples calculating $\tau_m$, for $m\leq 3$.
\end{abstract}

\maketitle

\section{Introduction}

Let $\M1g$ denote the mapping class group of a  genus $g>0$ surface $\S1g$ with one boundary component, and let
$\I1g$ denote its classical Torelli group, i.e., the subgroup of $\M1g$ acting trivially on the homology of $\S1g$.  In fact, this group is just the first of a series of nested subgroups called the ``higher Torelli groups''  $\M1g[k]$, which serve as successive approximations to $\M1g$.  

Johnson extensively studied the first two of these subgroups and in particular determined that for $g\geq 3$, $\I1g=\M1g[1]$ was  finitely generated \cite{Johnson83} while for $g\geq 2$, $\M1g[2]$ was isomorphic to the subgroup $\mc{K}_{g,1}$ generated by Dehn twists on separating curves \cite{Johnson85}.    Following the work of Sullivan \cite{Sullivan75}, Johnson  \cite{Johnson80,Johnson83a} also  defined certain abelian quotients of all Torelli groups, and these quotient maps $\tau_k$  are now called the ``Johnson homomorphisms''.  Johnson moreover determined the images of the first two homomorphisms (up to torsion), 
thus giving maps $\tau_1: \I1g \ra \Lambda^3 H$ and $\tau_2: \mc{K}_{g,1}\ra \Lambda^2 H\otimes \Lambda^2 H/\!\sim$ where $H=H_1(\S1g,\bZ)$ and the relation $\sim$ is recalled in Section \ref{sect:cochainreps}. 

In \cite{Morita93,Morita96,Morita01},  Morita  explicitly showed that the first and second Johnson homomorphisms $\tau_1$ and $\tau_2$ lift to crossed homomorphisms of the mapping class group $\M1g$ 
\begin{equation}  \label{eq:k1}
[\tilde k_1]\in H^1(\M1g,\Lambda^3 H)
\end{equation}
and 
\[
[\tilde k_2]\in H^1(\M1g, \mathcal H_2 \tilde \times \Lambda^3 H),
\]
where $\mathcal H_2$ is a module of 4-tensors of $H$ equal 
(modulo 2-torsion) to the image of $\tau_2$, 
and  the group structure on $\mathcal H_2 \tilde \times \Lambda^3 H$ is defined by a certain skew-symmetric pairing $\Lambda^3 H \times \Lambda^3 H \ra \mathcal H_2$ (see Section~\ref{sect:cochainreps}). 
   See also \cite{Hain,Kawazumi05,Day} regarding lifting the Johnson homomorphisms to the whole of the mapping class group.

Recently, Morita-Penner \cite{moritapenner} showed that the map \eqref{eq:k1} could be canonically lifted to a 
1-cocycle $j_1$ in the dual fatgraph complex (whose definition is recalled  in the next section),
\[
j_1\in Z_{\M1g}^1(\mathcal {\hat G} _T ,\Lambda^3 H),
\]
with $[j_1]=6[\tilde k_1]$.

Morita-Penner then raised the question as to the existence of an analogous ``groupoid'' lift of $\tau_2$.  In this paper, we give an affirmative answer to this question:

\begin{theorem}\label{t2exists}
There exists a canonically defined $\M1g$-equivariant 1-cocycle  
\[
j_2\in Z_{\M1g}^1(\mathcal {\hat G} _T ,\mathcal H_2 \tilde \times\Lambda^3 H)
\]
which represents the Johnson homomorphism $\tau_2$ on $\M1g[2]$  and maps to a multiple of $j_1$ under the natural projection.
\end{theorem}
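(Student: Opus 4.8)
The plan is to construct $j_2$ explicitly from the canonical Magnus expansion attached to each trivalent marked bordered fatgraph, exactly paralleling the construction of $j_1$ in Morita--Penner but pushing to second order. The abstract guarantees that every such fatgraph $G$ carries a canonical generalized Magnus expansion $\theta_G$, i.e. a group homomorphism from $\pi_1(\Sigma_{g,1})$ into the degree-completed tensor algebra $\widehat{T}(H)$ of the form $\theta_G(\gamma) = 1 + [\gamma] + (\text{higher order})$, where $[\gamma]$ denotes the homology class. My first step would be to extract from the order-$\leq 3$ part of $\theta_G$ a cochain on the dual fatgraph complex: given a Whitehead move (edge) from $G$ to $G'$, the two expansions $\theta_G$ and $\theta_{G'}$ differ by an inner automorphism / gauge term, and the second-order discrepancy should define the $\mathcal H_2$-component while the first-order discrepancy reproduces $j_1$ up to scale. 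Concretely I would define $j_2$ on an oriented edge of $\mathcal{\hat G}_T$ as the appropriate tensor coefficient of $\log(\theta_{G'}\circ\theta_G^{-1})$ truncated in degree $\leq 3$, landing in $\mathcal H_2 \tilde\times \Lambda^3 H$.

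\smallskip

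Second, I would verify that $j_2$ is a $1$-cocycle, i.e. that it satisfies the cocycle condition around each face of the fatgraph complex (each pentagon and each commuting-square relation among Whitehead moves). This is where the canonicity of the Magnus expansion does the essential work: because the $\theta_G$ are defined coherently for all $G$, the gauge terms compose telescopically, so the sum of $j_2$ around any closed loop of moves collapses to zero by construction. The pentagon relation will require checking that the degree-$3$ Baker--Campbell--Hausdorff corrections — which couple the first-order and second-order pieces and are precisely what the twisted product $\tilde\times$ with its skew pairing $\Lambda^3 H \times \Lambda^3 H \to \mathcal H_2$ is designed to absorb — cancel. I expect to invoke the recursive combinatorial formula promised in the abstract to make this cancellation transparent rather than computing it by brute force.

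\smallskip

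Third, I would check $\mathcal M_{g,1}$-equivariance and the two asserted properties. Equivariance follows because the mapping class group permutes marked fatgraphs and acts compatibly on $H$ and on $\widehat T(H)$, so naturality of $\theta_G$ under this action transports $j_2$ to itself up to the module action. To see that $j_2$ represents $\tau_2$ on $\mathcal M_{g,1}[2]$, I would restrict to a loop in the fatgraph complex corresponding to a mapping class $\varphi \in \mathcal M_{g,1}[2]$: summing $j_2$ around this loop yields $\theta(\varphi_*) - \theta$ in degree $\leq 3$, and since $\varphi$ acts trivially on $H$ and on $\Lambda^3 H$ the class reduces to the standard degree-$3$ term defining $\tau_2$. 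The projection statement is immediate: killing the $\mathcal H_2$-factor of $\mathcal H_2 \tilde\times \Lambda^3 H$ sends $j_2$ to its first-order part, which by step one is a multiple of $j_1$.

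\smallskip

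The main obstacle will be step two, the cocycle condition, and specifically isolating the correct $\mathcal H_2$-valued correction so that the nonlinear BCH terms assemble into the prescribed skew pairing. The subtlety is that $\log(\theta_{G'}\theta_G^{-1})$ in degree $3$ mixes a genuinely ``abelian'' $\Lambda^3 H$ contribution with a ``quadratic'' contribution built from products of first-order data, and only the combination dictated by the twisted group structure $\tilde\times$ is a cocycle. Getting the normalization (the multiple of $j_1$, paralleling the factor $6$ appearing for $j_1$) and the sign conventions consistent across all orientations of edges is the delicate bookkeeping I would need to pin down carefully.
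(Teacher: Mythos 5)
Your architecture is the same as the paper's: construct the canonical fatgraph Magnus expansion $\theta^G$, define for each Whitehead move $W:G\ra G'$ the discrepancy $\tau^{GG'}=\theta^{G}\circ(\theta^{G'})^{-1}\in IA(\widehat T)\cong\Hom(H,\widehat T_2)$, let the twisted product on $\mathcal H_2\tilde\times\Lambda^3 H$ absorb the composition cross-terms, and recover $\tau_2$ by restricting to loops representing elements of $\M1g[2]$. However, there is a genuine gap at the very first step, and it is exactly the point where the paper has to do real work: the degree-three discrepancy $\tG_2$ does \emph{not} lie in $\mathcal H_2$. The paper computes $36\,\tG_2$ explicitly in Section~\ref{sect:explicitformulae} and observes that it fails to vanish under the bracket map $H\otimes\El_3\ra\El_4$, so your proposed cochain --- a tensor coefficient of $\log(\theta^{G'}\circ(\theta^{G})^{-1})$ in degree $\leq 3$ --- simply does not take values in the stated coefficient module; no choice of sign convention or normalization repairs this, because the failure is structural (the fatgraph Johnson map of a single Whitehead move is not a Torelli-group value and is under no obligation to satisfy Morita's image constraint).

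The missing idea is the symmetrization map $\,{}\bar{}\,:H\otimes\El_3\ra\mathcal H_2$, $x\otimes[y,[z,w]]\mapsto\frac14\,[x,y]\lra[z,w]$, which is well defined and restricts to the identity on $\varpi(S^2(\Lambda^2H))$. The paper defines $\btG_2$ as the image of $\tG_2$ under this projection, and then verifies two things your outline would also need: (i) under the same projection, the cross-term $(\tau_1\otimes1+1\otimes\tau_1)\circ\tau_1'$ arising from the composition law becomes exactly the negative of Morita's skew pairing $\Lambda^3H\times\Lambda^3H\ra\mathcal H_2$, which is what makes $j_2(W)=72\,\bar\tau_2(W)\tilde\times\tau_1(W)$ a cocycle for the twisted group structure and yields the skew-symmetry $\btG_2=-\bar\tau_2^{G'G}$ (false for the unprojected $\tG_2$); and (ii) the projection changes nothing on the Torelli side: since the image of the honest $\tau_2$ lies in $\varpi(S^2(\Lambda^2H))\subset\mathcal H_2$, where the projection is the identity, one gets $\bar\tau_2(\varphi)=\tau_2(\varphi)$ for $\varphi\in\M1g[2]$, so the projected cocycle still represents $\tau_2$. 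Your closing paragraph correctly senses the difficulty (``isolating the correct $\mathcal H_2$-valued correction''), but the resolution is this specific projection together with identification (i), not bookkeeping of signs and normalizations; as written, your step one produces an object valued in $H\otimes\El_3$, and your steps two and three cannot even be formulated until it is pushed into $\mathcal H_2$.
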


In fact, much more is true:

\vskip .2in

\noindent Corollary~\ref{lifts} states that  \emph{all} the higher Johnson homomorphisms
lift to the groupoid level in a sense weaker than Theorem~\ref{t2exists} (see below and Section \ref{sect:morecochainreps});

\vskip .1in

\noindent Theorem~\ref{thm:tauisdeltaell} provides recursive formulae for the groupoid  Johnson lifts in terms of the (Campbell-)Hausdorff series (see Section~\ref{thegeneralcase});

\vskip .1in

\noindent Theorem~\ref{thm:tautensor} employs Poincar\'e duality on the surface to write the recursive formulae 
for the groupoid Johnson lifts as
explicit tensors in modules derived from the homology vector space;

\vskip .1in

\noindent  and Sections \ref{sect:explicitformulae}--\ref{sec63} 
 give progressively more complicated but explicit expressions for lifts of $\tau_1,\tau_2,\tau_3$, 
 (and to a lesser extent  $\tau _4$),
  and compare these results with those of Morita \cite{Morita89,Morita93a}.

\vskip .2in

\leftskip=0ex

It is worth emphasizing that it is only a matter of
patience to likewise derive explicit formulae on the groupoid level for $\tau_k$ with $k>4$, say, on the computer.
Using techniques of Morita, we further
massage our explicit formula for $\tau_2$ (see Section~\ref{sect:explicitformulae}) provided by  Theorem \ref{thm:tautensor}  to derive the 1-cocycle described in Theorem~\ref{t2exists}
(see Section~\ref{sect:cochainreps}).
We do not know if $\tau_k$ similarly lifts to a 1-cocycle for $k\geq 3$ (in no small part because
the appropriate coefficient modules are complicated) but nevertheless have
found explicit groupoid-level formulae in any case, which can be recursively computed
and are discussed in Section \ref{sect:morecochainreps}.

The proofs of these results rely on the existence of a canonical combinatorially defined ``fatgraph Magnus expansion'' $\theta^G:\pi_1\ra \widehat T$,
where $\widehat T$ is the ring of formal power series in several non-commuting variables,
and $\theta ^G$ is a suitable homomorphism.  Such a homomorphism $\theta ^G$
is associated to every  trivalent marked fatgraph $G$ and is described in terms of the induced 
 ``homology marking'' on its edges (see Section \ref{sect:Hmarkings}).   This is the heart of the paper (in Section \ref{sect:fatgraphmagnus}).
The derivation of this fatgraph Magnus expansion relies on certain combinatorial iterated integrals for such fatgraphs. 
Indeed, since one can construct  a different 
Magnus expansions as the holonomy of a certain flat connection  (cf. \cite{Kawazumi06}), it is natural that iterated integrals are involved.  Topologically, these integrals are taken along the path of the boundary $\partial \S1g$ beginning at the basepoint, while combinatorially they are iterated sums along the boundary edge-path of the fatgraph of specific
elements built from the homology marking.

Let us compare and contrast the approach of Morita-Penner with that of this paper (although Morita-Penner worked in  the context of punctured surfaces, their proofs work equally well in the bordered context).  In effect in \cite{moritapenner}, 
each ``Torelli space'', i.e., the quotient of Teichm\"uller space by a Torelli group, comes equipped with 
a natural ideal cell decomposition.  The combinatorial fundamental path groupoid of the dual 2-complex
gives a discretization of the fundamental path groupoid of Torelli space itself.  The oriented edges of this dual complex are in natural one-to-one correspondence with ``Whitehead moves'' (discussed in Section 3) between suitable  trivalent fatgraphs.  Furthermore, the 2-simplices of the dual complex give rise to three types of relations:
involutivity, commutativity, and the pentagon relation, which give a complete set of relations for the groupoid.   To  write a 1-cochain on the dual complex to Torelli space with values in some module, then, we must assign to each Whitehead move an element of the module.  This assignment is a 1-cocycle if and only if the involutivity, commutativity, and pentagon relations hold, and this is how it was confirmed in \cite{moritapenner} that the 1-cochain $j_1$ is actually a cocycle.

In this paper, we instead study generalized Magnus expansions, which in effect must satisfy two
key properties (orientation and vertex compatibility in Equations \eqref{eq:orientation} and \eqref{eq:vertex}), which are 
simpler than involutivity, commutativity, and the pentagon relation; in fact, we are able to
solve these two equations recursively here using the geometry of the underlying fatgraph.   

A basic
point of Kawazumi's theory \cite{Kawazumi05} is that the Johnson homomorphisms can be determined from any generalized Magnus expansion and in particular from our fatgraph Magnus expansion.  It follows that
our derived expressions for Johnson homomorphisms automatically satisfy
the involutivity, commutativity, and pentagon relations, a fact which is by no means obvious
from the formulae themselves.

The sense in which our groupoid lifts of $\tau_1,\tau_2$ are special is
that we have honest algebraic 1-cocycles with values in an appropriate module
in these two cases only.  In general for higher Johnson homomorphisms, we have only a
description as an invariant
of based homotopy classes of paths in Torelli space, hence a kind of ``cocycle with non-abelian coefficients''.  (See Section \ref{sect:morecochainreps} for further discussion.)

As a general point, we think it is no accident that the formalism here of homology marked fatgraphs
is similar to that of finite-type invariants of 3-manifolds, cf. \cite{garalevine98}, and our explicit calculations
here of higher Johnson homomorphisms determine (the tree-like part of) values of the latter under suitable circumstances \cite{garalevine05,Habegger,Levine,LevineAd}.  In fact, an appropriate setting for both discussions seems to be the theory of ``homology cobordisms'' \cite{chm}. 
 In this context, say in the setting of mapping tori of surface automorphisms, Whitehead moves can be seen to correspond to appropriate elementary moves on ideal triangulations
of the 3-manifold.  Thus, our formulae should provide purely combinatorial expression 
for (reductions of)  finite-type invariants of mapping tori under appropriate circumstances.  In this sense, 
Whitehead moves rather than Dehn twists indeed seem the natural generators.

One could say that ``here we give the first explicit calculations of the higher Johnson homomorphisms'',
but this is in a sense a swindle since, first of all, of course they can be computed from the definitions
in terms of the action of Dehn twists on the fundamental group, which is on the other hand completely unwieldy. The second swindle is that we have obviated here the need for describing generators of the higher
Torelli groups by working on the groupoid level, where all higher Torelli groupoids are generated by ``Whitehead moves on fatgraphs'' (see Section 3).  

On the other hand, this second swindle is not without significance, and we have truly
given a new closed form recursive expression for the higher Johnson homomorphisms on this groupoid level, at the very least, a new type of explicit algorithm for their calculation.   The most powerful
alternative method, which is presumably practicable for calculating $\tau _3$ and maybe $\tau _4$, is
to rely on the Magnus representation as described in \cite{Morita93a} and studied in \cite{Suzuki02,Suzuki05}.

We parenthetically mention that other homomorphisms on (subgroups of) mapping class groups can also be lifted to the groupoid level.  Namely, it is shown in \cite{abp}
that the ``Nielsen'' representation of the mapping class group of a (once-) bordered surface in
the automorphism group of a free group, the Magnus representation, and the symplectic representation all lift to the groupoid level, and explicit formulae are given.

This paper is organized as follows.  We begin with notation and background definitions of the Torelli groups, Johnson maps, and generalized Magnus expansions in Section 2.  In Section 3, we discuss the properties of fatgraphs and the fatgraph complex which will be needed for our results.  Section 4 introduces the fatgraph Magnus expansion and the fatgraph Johnson maps.   In Section 5, we reinterpret the results of the previous section in terms of invariant tensors with explicit expressions
for $\tau_1,\tau_2,\tau_3$ in Section \ref{sec51} and explicit cocycle representatives for the former two
in Section \ref{sect:cochainreps}.  In Section \ref{sect:examples}, we illustrate our techniques with examples of  twists on separating curves calculating $\tau_1,\tau_2$ in Section \ref{sec61}, providing a general procedure for computing $\tau _3$ in Section \ref{sec62}, and giving  explicit values  of $\tau_3$ for certain  separating twists related to Lickorish's  generators in Section \ref{sec63}.

\section{Torelli groups and Johnson homomorphisms}

We begin by establishing notation.  Let $\S1g$ denote a fixed oriented surface of genus $g>0$ with one boundary component, and let $p$ and $q$ be distinct points on the boundary. (The point $q$ will not be needed until the next section.)  Let $\pi_1=\pi_1(\S1g,p)$ denote the fundamental group of this surface relative to the basepoint $p$, which is isomorphic to a free group $F_{2g}$ on $2g$ generators.  Let $\partial\in\pi_1$ denote the element represented by the path of the boundary $\partial\S1g$, which we sometimes consider as a word in the generators of $F_{2g}$.   For example, in terms of the standard symplectic generators of $\pi_1$ shown in Figure \ref{fig:symplecticbasis}, we have $\partial=\prod_{i=1}^g[\mb{u}_i,\mb{v}_i]$.

\begin{figure}[!h]
\begin{center}
\epsffile{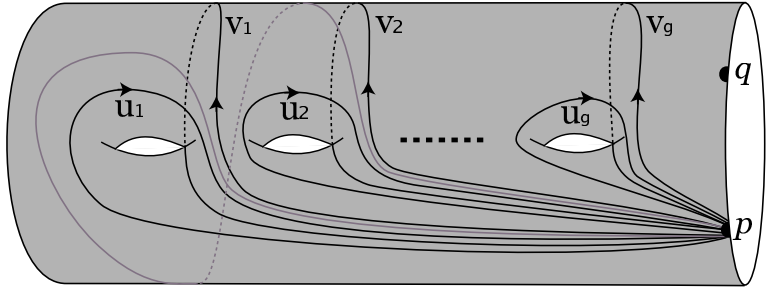}
\caption{A symplectic basis for $\pi_1$.}
\label{fig:symplecticbasis}
\end{center}
\end{figure}

 We define the {\it mapping class group}  of $\S1g$ to be the  isotopy classes of self-diffeomorphisms of $\S1g$ which fix the boundary pointwise, $$\M1g=\pi_0(\Diff(\S1g),\partial\S1g),$$ where isotopies are required to also fix the boundary pointwise.   $\M1g$ acts on $\pi_1$ in the obvious way, and by a classical result of Dehn-Nielsen, we have
 \[
 \M1g\cong\{  f\in\Aut(F_{2g}) \; | \; f(\mb{\partial})=\mb{\partial}   \}.
 \]
$\M1g$  acts also on  the abelianization of $\pi_1$ which we denote by $H\cong H_1(\S1g,\bZ)$. The  kernel of this  action $\M1g \ra \Aut(H)$ is nothing but the classical Torelli group $\I1g$, while the image is classically known to be $Sp(2g,\bZ)$.  

More generally, let $N_k$ denote the $k$th term of the lower central series of $\pi_1$ so that $N_k=\pi_1/\Gamma_k$ with $\Gamma_0=\pi_1$ and $\Gamma_{k+1}=[\Gamma_{k},\Gamma_0]$.  Thus, 
$\M1g$ acts on each $N_k$,  and we define the $k$th {\it Torelli group} to be the kernel of the map 
\[
\M1g[k]=\Ker (\M1g\ra \Aut(N_k)  ).
\]

Using the short exact sequence
\[
0\ra \Gamma_k/\Gamma_{k+1}\ra N_{k+1} \ra N_k \ra 1,
\]
one can show that for each element $\varphi\in\M1g[k]$ and each element $x\in N_{k+1}$ we have  $\varphi(x)x^{-1}\in\Gamma_k/\Gamma_{k+1}$.  By a classical result of Magnus, it is known that $\Gamma_k/\Gamma_{k+1}\cong \El_{k+1}$ where $\El_{k+1}$ is the $(k+1)$st graded component of the free Lie algebra  on $H$ \cite{mks}.  It can be shown that this leads to a homomorphism
\[
\tau_k: \M1g[k]\ra \Hom(H,\El_{k+1})\cong H\otimes \El_{k+1},
\]
where we have made implicit use of the \Poin duality of the surface $H^*\cong H$
i.e., $\Hom(H,\El_{k+1})\cong H^*\otimes \El_{k+1}\cong
H\otimes\El _{k+1}$.
 These  maps $\tau_k$ are the $k$th {\it Johnson homomorphisms}.  By definition, the kernel of $\tau_k$ is precisely $\M1g[k+1]$, while 
Morita \cite{Morita93a} has shown  that its image under $\tau _k$ is a  submodule of $\mc{H}_k\subset H\otimes \El_{k+1}$, where $\mc{H}_k$ is the kernel of the bracket map $H\otimes \El_{k+1}\ra\El_{k+2}$.

\subsection{Generalized Magnus expansions}
Let $F$ be a free group, whose abelianization was denoted $H$ in the previous section.
From here on, we shall work over the rationals and henceforth let $H$ denote the tensor
product of the abelianization of $F$ with the rationals ${\mathbb Q}$, i.e., $H=H_1(\S1g;{\mathbb Q})$.
Let
$\widehat T$ denote the completed tensor algebra of $H$, 
$$\widehat T = \prod_{i=0}^\infty H^{\otimes i},$$
so $\widehat T$ is naturally identified with the ring of all formal power series
in generators of $H$.
Note that $\widehat T$  is naturally  filtered by ideals  $\widehat T_p = \prod_{i=p}^\infty H^{\otimes i}$.   

A {\it generalized Magnus expansion} of $F$ (over $\mathbb Q$) in the sense of \cite{Kawazumi05}  is a group homomorphism $$\theta: F~~\ra ~~1+ \widehat T_1$$ such that $$\theta(\mb{a})=1+a +\theta_2(\mb{a}) +\theta_3(\mb{a}) +\ldots$$ for any ${\bold a} \in F$, 
where $\theta_i({\bold a}) \in 
H^{\otimes i}$ is the $i$th graded component of the tensor $\theta({\bold a}) \in \widehat{T}$ and $a=[{\bold a }]\in H$.  
The {\it standard} Magnus expansion (studied by Magnus {\it et al.}) is the simplest possible
(non-canonically however) :
$\theta ({\bold x}_i)=1+x_i$, for some prescribed generating set $\{{\bold x}_i\}$ of $F$.  The group $IA(\widehat T)\subset \Aut(\widehat T)$ of filter-preserving algebra isomorphisms acting trivially on $\widehat T_1/\widehat T_2$  acts transitively and freely on the space of all Magnus expansions and can be identified with $\Hom(H,\widehat T_2)$ by $U\mapsto U|_H - id|_H$ for $U\in IA(\widehat T)$, cf.  \cite{Kawazumi05}.   

Now consider the special case when $F=\pi_1$ and let  $\varphi\in\I1g$ and $\theta:\pi_1\ra \widehat T$ be any Magnus expansion.  
 $\theta$ induces a natural isomorphism $\widehat{\mathbb Q[\pi_1]}\cong \widehat{T}$, with $\widehat{\mathbb Q[\pi_1]}$ the completed group algebra of $\pi_1$ with respect to the augmentation ideal, and the action of $\varphi$ on $\pi_1$ extends to an automorphism of $\widehat{\mathbb Q[\pi_1]}$. Following  \cite{Kawazumi05}, we define the ``total Johnson map'' $\tau^\theta(\varphi)$ as 
\[
\tau^\theta(\varphi) = \theta\circ\varphi\circ\theta^{-1}: 
\widehat{T} \to \widehat{\mathbb Q[\pi_1]} \to \widehat{\mathbb Q[\pi_1]}
\to \widehat{T}.
\]
Let $\tau^\theta_m(\varphi)$ denote the $m$th component of $\tau^\theta(\varphi)$ considered as an element in $\Hom(H ,H^{\otimes m+1})$ by the association $IA(\widehat T)\cong \Hom(H,\widehat T_2)$.  

Kitano \cite{Kitano} first expressed the Johnson homomorphisms in terms of the standard
Magnus expansion.  Kawazumi's generalization \cite{Kawazumi05} introduced generalized Magnus expansions and extended the domain of the Johnson homomorphisms (as maps) to mapping class groups and  beyond to automorphism groups of free groups.

In particular for $m>0$, $\tau^\theta_m$ gives a group homomorphism $\tau^\theta_m : \M1g[m]\ra H^*\otimes H^{\otimes (m+1)}$ which is independent of the Magnus expansion $\theta$ and coincides with the Johnson homomorphism.  From now on, we shall drop the dependence of $\theta$ in the notation.
For $\varphi\in\M1g[m]$, we have
\[
\tau_m(\varphi)(x)=\theta_m(\varphi (\mb{x}))-\theta_m(\mb{x}),
\]
 while $\tau_j(\varphi)(x)$ vanishes when $j<m$.

In particular, given an element $\varphi\in\I1g={\mathcal M}_{g,1}[1]$, the classical Johnson homomorphism $\tau_1$ is described by 
\[
\tau_1(\varphi)(x)=\theta_2(\varphi(\mb{x}))-\theta_2(\mb{x}).
\]
The map  $\tau_2$, however,   is not a group homomorphism on $\I1g$, but rather is described by 
\[
\tau_2(\varphi)(x)=\theta_3(\varphi(\mb{x})) -\theta_3(\mb{x}) -(\tau_1(\varphi)\otimes 1+1\otimes\tau_1(\varphi) ) \circ \theta_2(\mb{x}),
\]
which restricts to  a group homomorphism on $\mc{K}_{g,1}={\mathcal M}_{g,1}[2]$ since $\tau_1$ vanishes there.

\vskip .2in

\section{The fatgraph complex and Torelli groupoids}

Recall \cite{Penner88} that a {\it fatgraph} is a connected 1-dimensional CW complex with a prescribed cyclic ordering  of all  half edges incident at each vertex.  The cyclic ordering at each vertex can be used to define cycles of oriented edges, called \emph{boundary cycles}, by associating to an incoming edge an outgoing edge which is next in the cyclic ordering.  We say that a fatgraph has genus $g$ if gluing a 2-cell along each boundary cycle produces a closed surface of genus $g$.

We say a fatgraph is a \emph{(once-)bordered fatgraph} if there is only one boundary cycle, and all vertices are of valence at least three except for a single univalent vertex.  We call the edge incident to this univalent vertex the {\it tail}.  We shall usually give the tail an orientation so that it points away from the univalent vertex and denote this oriented edge by $\mb{t}$.  

A \emph{marking} of a genus $g$ bordered fatgraph $G$ is a homotopy class of embeddings $f:G\hra\S1g$ such that the complement $\S1g\backslash f(G)$ is contractible and $G\cap\partial\S1g=t\cap\partial \S1g = \{q\}$.  Note that the mapping class group acts naturally on the set of markings of $G$, 
hence so too do the Torelli groups.  \Poin dual to any marked bordered fatgraph is a filling arc family (i.e., every essential closed curve in $\S1g$ meets the arc family), and moreover, this arc family can be chosen so that all arcs are based at  the basepoint $p$.  This is illustrated in the Figures \ref{fig:symplecticbasis} and \ref{fig:4ggon}, where we have chosen the dual arc family to be an extension of the standard set of symplectic generators of $\pi_1$. 
  
\begin{figure}[!h]
\begin{center}
\epsffile{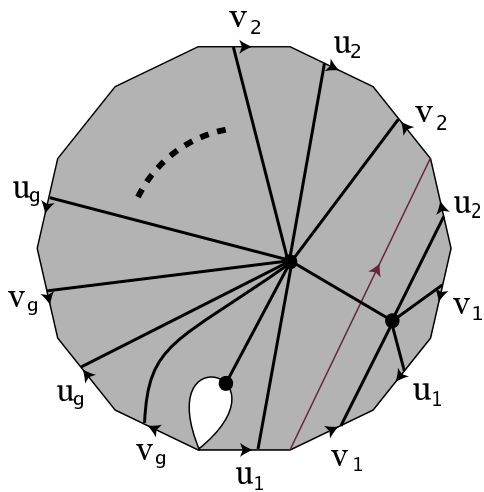}
\caption{Fatgraph and dual arc family extending the standard symplectic basis of $\S1g$.}
\label{fig:4ggon}
\end{center}
\end{figure}
  
Using the orientation of the surface, this allows for an association of an element of $\pi_1$ to each oriented edge of a marked bordered fatgraph, where $\partial$ is chosen to be associated to the tail with its reversed orientation $\mb{\bar t}$.  

 Let $\mc{E}_{or}(G)$ denote the set of oriented edges of $G$ and $\bar{\mb{e}}$ denote the edge $\mb{e}\in \mc{E}_{or}(G)$ with the opposite orientation.
 
\begin{definition}\label{defnofmarking}
A $\pi$-marking of a bordered fatgraph $G$ is a map $\pi:\mc{E}_{or}(G)\ra\pi_1$ 
which satisfies the following conditions:
\begin{itemize}
\item{\bf (orientation)} For every oriented edge $\mb{e}$ of $G$, 
\[
\pi(\bar{\mb{e}})\pi(\mb{e})= 1;
\]
\item {\bf (vertex compatibility)} For every vertex $v$ of $G$,
\[
\pi(\mb{e}_1)\pi(\mb{e}_2)\dotsm\pi(\mb{e}_k)=1,
\]
where  $\mb{e_i}$ are the cyclically ordered edges incident of $v$ and oriented pointing inwards $v$; 
\item {\bf (full rank)}
$\im(\pi)$ generates $\pi_1$;
\item {\bf (geometricity)}
$\pi(\mb{\bar t})=  \partial$.
\end{itemize}
\end{definition}

 More generally, define an {\it abstract $K$-marking} of $G$ for any group $K$ as a map which satisfies the corresponding orientation and vertex compatibility conditions.  We say that an abstract $N_k$-marking of $G$ is \emph{geometric} if it descends from a $\pi$-marking under the quotient map $\pi_1\ra N_k$.  

\begin{lemma}\label{hopfian}
The two notions of markings are equivalent, i.e., there is a natural bijection between the set of marked bordered fatgraphs and the set of $\pi$-marked bordered fatgraphs.
\end{lemma}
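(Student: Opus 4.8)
The plan is to identify both notions of marking with the same set of group isomorphisms, the passage from the weaker $\pi$-marking conditions to an honest isomorphism being exactly the point at which the Hopfian property of free groups is used (whence the name of the lemma).

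First I would make precise the forward map, from marked to $\pi$-marked fatgraphs, which is already sketched before the statement: given a marking $f\colon G\hra\S1g$, \Poin duality produces an arc family based at $p$ with one arc transverse to each edge, and the homotopy class of the based loop crossing the edge $\mb e$ defines $\pi_f(\mb e)\in\pi_1$. I would check the four defining conditions directly: reversing the orientation of an edge inverts the crossing loop (orientation); the arcs incident to a vertex $v$ cobound the \Poin-dual $2$-cell, a disc, so the cyclic product of their classes is trivial (vertex compatibility); the arc complement is a union of such discs, so the loops generate $\pi_1$ (full rank); and the tail is dual to the boundary-parallel arc, giving $\pi_f(\mb{\bar t})=\partial$ (geometricity). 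Naturality of this construction makes $f\mapsto\pi_f$ equivariant for the evident $\M1g$-actions, where $\varphi$ acts by $(\varphi\cdot\pi)(\mb e)=\varphi(\pi(\mb e))$ on $\pi$-markings (each condition is preserved since $\varphi\in\Aut(\pi_1)$ with $\varphi(\partial)=\partial$) and by post-composition $f\mapsto\varphi\circ f$ on markings.

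The key algebraic observation is that the orientation and vertex-compatibility conditions are precisely the relations of a presentation. Thicken $G$ along its cyclic orderings to the surface $\Sigma_G$, which has genus $g$ and one boundary component and hence is homeomorphic to $\S1g$, with $G$ sitting inside as a spine. The arcs dual to the edges, all based at $p$, cut $\Sigma_G$ into one disc per vertex, so they furnish a CW presentation of $\pi_1(\Sigma_G,p)$ with one generator per edge and one relation per vertex; call this group $\Pi(G)$, so that $\Pi(G)\cong\pi_1(\Sigma_G)\cong\pi_1\cong F_{2g}$. With this language a $\pi$-marking is exactly a homomorphism $\rho_\pi\colon\Pi(G)\ra\pi_1$, $[\mb e]\mapsto\pi(\mb e)$, which is surjective by the full-rank condition and sends $[\mb{\bar t}]\mapsto\partial$ by geometricity. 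Since $\Pi(G)\cong\pi_1\cong F_{2g}$ is a free group of finite rank, it is Hopfian, so the surjection $\rho_\pi$ is automatically an isomorphism; as it fixes the boundary class, Dehn--Nielsen identifies $\rho_{\pi'}\circ\rho_\pi^{-1}$ (for two $\pi$-markings $\pi,\pi'$) with an element of $\M1g$. This shows at once that $\M1g$ acts transitively on $\pi$-markings, and freely, since a $\varphi$ fixing a $\pi$-marking fixes the generating set $\im(\pi)$ and hence is the identity; so the $\pi$-markings of $G$ form an $\M1g$-torsor.

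Finally I would combine these steps. The markings of $G$ likewise form an $\M1g$-torsor: any two spines of $\S1g$ with the prescribed boundary behaviour differ by a self-homeomorphism, and a mapping class fixing a marked spine is isotopic to the identity. An $\M1g$-equivariant map between two torsors under $\M1g$ is necessarily a bijection, so fixing any reference marking $f_0$ with $\pi_0=\pi_{f_0}$ the forward map sends $\varphi\circ f_0\mapsto\varphi\cdot\pi_0$ and is a bijection; taking the union over all bordered fatgraphs $G$ gives the asserted natural bijection. The main obstacle is the surjectivity, i.e.\ realizing an abstract $\pi$-marking geometrically: everything hinges on recognizing the four conditions as defining a surjective, $\partial$-fixing homomorphism $\Pi(G)\ra\pi_1$ and then upgrading it to an isomorphism via Hopficity; the forward direction and the torsor bookkeeping are routine by comparison.
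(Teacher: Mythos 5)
Your proposal is correct and is essentially the paper's own argument: both hinge on comparing the given $\pi$-marking with the $\pi$-marking induced by an arbitrary reference marking of $G$, upgrading the resulting surjection of $\pi_1$ onto itself to an isomorphism via the Hopfian property, and then realizing that isomorphism as a mapping class via Dehn--Nielsen (boundary-preservation coming from geometricity). Your torsor/presentation packaging (the group $\Pi(G)$ and the two $\M1g$-torsors) is a tidier reformulation of what the paper does directly with a free basis given by the edges complementary to a maximal tree, but the mathematical content is the same.
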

\begin{proof}
We have already shown that each marking $f:G\hra\S1g$ of a bordered fatgraph $G$ leads to a map from $\mc{E}_{or}(G)$ to $\pi_1$, and it is immediately verified that this  map is a $\pi$-marking of $G$.   We need to exhibit the inverse map.  Let $(G,\pi)$ be a $\pi$-marked fatgraph and let  $f:G\hra\S1g$  be any marking of $G$.  Let $X$ be a set of oriented (distinct) edges whose complement in $G$ is a maximal tree.   It is easy to see that the  group elements associated to the set  $f(X)$ give a set of generators of $\pi_1$. Thus, there is  a homomorphism of $\pi_1$ onto itself which  takes $f(X)$ to $\pi(X)$, and  by the Hopfian property 
\cite{mks} of $\pi_1$, this is an isomorphism.  Since 
 this isomorphism must take $f(\mb{t})$ to $\pi(\mb{t})$, it lies in $\M1g\subset\Aut(\pi_1)$.   If $\phi$ is a diffeomorphism representing this mapping class, then $\phi\circ f:G\hra\S1g$ gives an induced marking of $G$.  It is easy to see that this is the inverse of the original map.
\end{proof}

From now on we shall assume all fatgraphs are once-bordered fatgraphs.  We shall also often assume that a fatgraph $G$ comes with a particular marking, although we may at times suppress the marking  from the notation.  In particular, for $\varphi\in\M1g$ we shall denote by $\varphi(G)$ the fatgraph $G$ with marking altered by post-composing with $\varphi$.

Given any non-tail edge  $\mb{e}$ of $G$ with distinct endpoints, we can collapse $e$ to obtain a new fatgraph $G'$.  Moreover, using the vertex compatibility condition, any $\pi$-marking of $G$ determines a unique $\pi$-marking of $G'$ and vice versa.  

In particular, consider the elementary \emph{Whitehead move} which collapses an edge $e$ (with distinct endpoints) of a trivalent fatgraph $G$ and then un-collapses this vertex in the unique distinct way producing a new edge $f$ and a new fatgraph $G'$.   We shall often denote such a Whitehead move
along $e$ simply as $W:G\to G'$.  The evolution of the $\pi$-markings under such a move is  summarized in Figure \ref{fig:caseI}. 

Recall \cite{Harer86,Penner87,Penner04} that 
the {\it fatgraph complex} $\mc{G}_T$ by definition has an open $k$-simplex for every marked bordered fatgraph with $k+1$ non-tail edges and has face relations  given by the evolution of the markings under edge collapse.  It is known \cite{Harer86,Penner87,Penner04} that the geometric realization $|\mc{G}_T|$ is homeomorphic to an open ball (namely, a decorated Teichm\"uller space), thus \Poin dual to this complex is a homotopically equivalent complex we denote by $\hat{\mc{G}}_T$.  $0$-cells of $\hat{\mc{G}}_T$ correspond to trivalent marked fatgraphs and  oriented $1$-cells correspond to Whitehead moves between such graphs.  The $2$-cells of $\hat{\mc{G}}_T$ come in two varieties, corresponding  to marked fatgraphs which either have two $4$-valent vertices or one $5$-valent one with the rest trivalent.  

 $\mc{G}_T$, and thus also  $\hat{\mc{G}}_T$,  is a contractible complex upon which $\M1g$ acts freely and properly discontinuously.  Let  $\mc{G}_M$ be the quotient complex under this action, so that $\M1g\cong\pi_1(|\mc{G}_M|)$.  Similarly,  let  $\mc{G}_I$ be the quotient under the action by $\I1g$ so that $\I1g\cong\pi_1(|\mc{G}_I|)$ .  Similarly define $\hat{\mc{G}}_M$ and $\hat{\mc{G}}_I$.  Note that  $0$-cells in $\hat{\mc{G}}_M$ correspond to equivalence classes of (unmarked) trivalent bordered fatgraphs $G$.  

Since each element of ${\mathcal M}_{g,1}$ can be represented by a loop in $|\hat{\mc{G}}_M|$, it can dually be represented by a path in $|\hat{\mc{G}}_T|$ corresponding to a sequence of Whitehead moves on trivalent marked fatgraphs beginning and ending on isomorphic (unmarked) fatgraphs.
Furthermore, this representation is unique modulo  the commutativity and pentagon relations corresponding to the non-degenerate 2-cells in $\hat{\mc{G}}_T$ and the involutivity relation
corresponding to degenerate 2-cells.

Thus, the mapping class group ${\mathcal M}_{g,1}$ is realized as the stablizer of any point in the fundamental path groupoid of the manifold $|\mc{G}_M|$.  Likewise following \cite{moritapenner}, we define the $k$th {\it Torelli groupoid} to be the fundamental path groupoid of the manifold
$|\mc{G}_T|/{\mathcal M}_{g,1}[k]$, and in particular, the {\it (classical) Torelli groupoid} is the fundamental path groupoid of
$|\mc{G}_I|=|\mc{G}_T|/{\mathcal M}_{g,1}[1]$.

\subsection{Homology markings}
\label{sect:Hmarkings}

We now wish to describe the cells in $\hat{\mc{G}}_I$, each of which  is an $\I1g$-orbit of a marked fatgraphs.  These can be identified with geometrically $H$-marked fatgraphs, meaning the $H$-markings descend from a $\pi$-marking under the abelianization map $\pi_1\ra H$.  Thus, we are lead to consider the problem of recognizing which $H$-markings of a fatgraph are geometric.

 By abuse of notation, we shall usually denote the $\pi$-marking of an oriented edge $\mb{e}$ simply by boldface $\mb{e}=\pi(\mb{e})$ while we will denote the corresponding $H$-marking simply by the lowercase $e=H(\mb{e})=[\mb{e}]$.   We  will also  denote the homology intersection pairing of $H$ provided by the \Poin duality of $\S1g$ by a dot $\; \cdot\;:H\times H \ra \bQ$.    

Given a fatgraph $G$, we define a skew-symmetric pairing $\langle\;\, ,\;\rangle:\mc{E}_{or}(G)\times\mc{E}_{or}(G)\ra\{1,-1,0\}$
as follows.  Given two oriented distinct  edges $\mb{a}$ and $\mb{b}$, consider the path along the boundary cycle of $G$ starting at $\mb{a}$.  If the sequence traverses the edges in order $\mb{a}$, $\mb{b}$, $\mb{\bar a}$, $\mb{\bar b}$, then we set $\langle \mb{a},\mb{b}\rangle=-1$.  If the order is  $\mb{a}$,  $\mb{\bar b}$, $\mb{\bar a}$, $\mb{b}$, then we set $\langle\mb{a},\mb{b}\rangle=1$.  Otherwise, we set $\langle\mb{a},\mb{b}\rangle=0$.  We also set $\langle\mb{e},\mb{e}\rangle=\langle\mb{e},\mb{\bar e}\rangle=0$ for all oriented edges $\mb{e}$.

One can directly check  that for any geometric $H$-marking  of $G$, the homology intersection pairing  
 matches (under our orientation conventions) the skew pairing on oriented edges defined above, meaning $\langle\mb{a},\mb{b}\rangle=a\cdot b$ for all oriented edges $\mb{a}$ and $\mb{b}$ of $G$.  
  In fact, we have also the reverse implication:
\begin{proposition}\label{geomh}
An $H$-marking on a once-bordered fatgraph $G$ is geometric if and only if
$\langle\mb{a},\mb{b}\rangle=a\cdot b$, for all oriented edges $\mb{a},\mb{b}$ of $G$.
\end{proposition}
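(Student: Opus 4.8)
The plan is to treat the forward implication as the direct computation the text already alludes to and to put the real work into the converse. For the forward direction I would argue that in a geometric $H$-marking the class $a=H(\mb a)$ of an oriented edge is the homology class of the based loop carried by the \Poin dual arc, so that $a\cdot b$ is the algebraic intersection number of two such loops; reading that intersection number off from the cyclic order in which the four oriented edges $\mb a,\mb b,\bar{\mb a},\bar{\mb b}$ are met along the single boundary cycle recovers precisely the combinatorial signs in the definition of $\langle\mb a,\mb b\rangle$. This yields $\langle\mb a,\mb b\rangle=a\cdot b$, and it is the only place the geometricity is really used in this direction.

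For the converse, suppose $\mu$ is an $H$-marking of $G$ with $\langle\mb a,\mb b\rangle=\mu(\mb a)\cdot\mu(\mb b)$ for all oriented edges, and work with $H$-markings valued in the integral lattice $H_1(\S1g;\bZ)$. First I would fix \emph{any} marking $f\colon G\hra\S1g$ and let $\mu_0$ be the geometric $H$-marking it induces; by the forward direction $\mu_0$ satisfies the same identity, with the \emph{same} matrix, since $\langle\;,\;\rangle$ depends only on $G$ and not on the marking. Next choose a spanning tree of $G$ containing the tail and let $\mb e_1,\dots,\mb e_{2g}$ be the complementary oriented edges; their classes form a basis of $H_1(G)\cong H$, so an $H$-marking is \emph{determined} by its values on them, the values on the tree edges (the tail included) being forced by vertex compatibility. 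Writing $e_i=\mu(\mb e_i)$, $e_i^0=\mu_0(\mb e_i)$, and $J_{ij}=\langle\mb e_i,\mb e_j\rangle$, I get $e_i\cdot e_j=J_{ij}=e_i^0\cdot e_j^0$. Since $\{e_i^0\}$ is a basis of $H$ and the intersection pairing is non-degenerate, $J$ is its (unimodular) Gram matrix; non-degeneracy of $J$ then forces the $e_i$ to be linearly independent as well, so they too form a basis of $H$ with identical Gram matrix $J$.

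Consequently there is a unique linear isomorphism $\Phi\colon H\ra H$ with $\Phi(e_i^0)=e_i$, and equality of the two Gram matrices makes $\Phi$ preserve the intersection pairing, i.e.\ $\Phi\in Sp(2g,\bZ)$. Because the symplectic representation $\M1g\ra Sp(2g,\bZ)$ is surjective, I would lift $\Phi$ to some $\varphi\in\M1g$. Then $\varphi\cdot\mu_0$ is again geometric (it is the abelianization of the $\pi$-marking $\varphi\circ f$) and takes the value $\Phi(e_i^0)=e_i=\mu(\mb e_i)$ on each basis edge. As $\varphi\cdot\mu_0$ and $\mu$ are $H$-markings agreeing on $\mb e_1,\dots,\mb e_{2g}$, they agree on every edge, so $\mu=\varphi\cdot\mu_0$ is geometric.

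I expect the crux to be the full-rank step: one must upgrade the pairing hypothesis into the statement that the $e_i$ actually form a basis, and this hinges on knowing that the fixed matrix $J$ attached to $G$ is non-degenerate, which is exactly where the reference marking $\mu_0$ and \Poin duality enter. A secondary point needing care is integrality: to land in $Sp(2g,\bZ)$ rather than merely $Sp(2g,\bQ)$ I would observe that the lattice spanned by the $e_i$ sits inside $H_1(\S1g;\bZ)$ and carries a unimodular restriction of the intersection form, so an index argument shows it is the whole lattice and $\{e_i\}$ is an integral symplectic basis; this is what legitimizes the appeal to surjectivity of the symplectic representation.
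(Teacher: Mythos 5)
Your proposal is correct and is essentially the paper's own argument: the paper's proof is a one-line sketch saying it "follows that of Lemma~\ref{hopfian} based on surjectivity of the map $\M1g\ra Sp(2g,\bZ)$", and your write-up is exactly that strategy carried out in detail (fix a reference marking, read off values on the edges complementary to a maximal tree, build a symplectic automorphism from the matching Gram matrices, and lift it to $\M1g$ by surjectivity of the symplectic representation). Your attention to the non-degeneracy/unimodularity step, which substitutes for the Hopfian property in the homological setting, is precisely where the pairing hypothesis does its work.
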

\begin{proof}
The proof follows that of Lemma~\ref{hopfian} based on surjectivity of the map $\M1g\ra Sp(2g,\bZ)$.
\end{proof}

\section{Fatgraph Magnus expansions and Johnson maps}
\label{sect:fatgraphmagnus}
 
\begin{figure}[!h]
\begin{center}
\epsffile{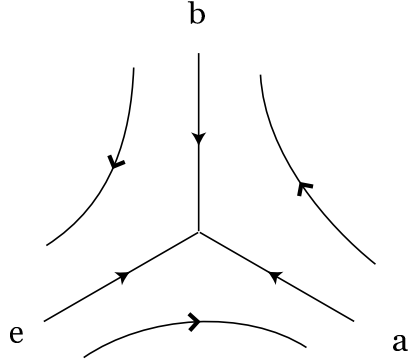}
\caption{Labeling of the vertex}
\label{fig:vertex}
\end{center}
\end{figure}

We now wish to describe a canonical Magnus expansion $\theta^G:\pi_1\ra \widehat T$ for every trivalent marked bordered fatgraph $G$.  The goal is to construct $\theta^G$ as a $(1+\widehat T_1)$-marking of $G$ such that  for any oriented edge $\mb{e}\in \mathcal{E}_{or}(G)$,  $\theta^G(\mb{e})=1+e+\theta^G_2(\mb{e})+ \ldots$ is described purely in terms of the combinatorics and the induced $H$-marking of $G$.   

 For this, consider a vertex of $G$ labeled as in Figure \ref{fig:vertex}. 
Any solution $\theta^G$ must satisfy the 
orientation  
\begin{equation}\label{eq:orientation}
\theta^G(\mb{e})\theta^G(\mb{\bar e})=1
\end{equation}
and
vertex compatibility condition
\begin{equation}\label{eq:vertex}
\theta^G(\mb{a})\theta^G(\mb{b})\theta^G(\mb{e})=1.
\end{equation}

As a warm-up to the general solution, we begin by solving for $\theta^G$ up to degree 2.  We let $\exp(e)$ serve as the first approximation to $\theta^G(\mb{e})$ and write 
\[
\theta^G(\mb{e})=1+e+\half e^2 +\ell_2(\mb{e}) +\theta^G_3(\mb{e}) +\ldots
\] 
and similarly for $\theta^G(\mb{a})$ and $\theta^G(\mb{b})$,
where $e^2$ denotes $e\otimes e$ and $\ell_2(\mb{e})$ captures the noncommutativity of \eqref{eq:vertex} in degree  2.  Note that the orientation condition forces $\ell_2(\mb{e})=-\ell_2(\mb{\bar e})$, which motivates setting 
\[
\ell_2(\mb{e})=g_2(\mb{e})-g_2(\mb{\bar e}),
\]
 with $g_2$ some function on oriented edges of $G$ for which we must solve.

Expanding \eqref{eq:vertex}  to degree 2 and using the fact that $a+b+e=0$, we find
\[
\begin{split}
-\ell_2(\mb{e})-\ell_2(\mb{a})-\ell_2(\mb{b})&=a\otimes b+a\otimes e+b\otimes e  +\half(a^2+b^2+e^2)  \\
&=\half ( a\otimes b -b \otimes a) \\
&= \sixth([a,b] +[b,e]+[e,a]),
\end{split}
\]
where we have denoted $x \otimes y - y \otimes x$ by $[x,y]$.

 Rewriting in terms of $g_2$, we have
\[
g_2(\mb{\bar a})+g_2(\mb{\bar b})+g_2(\mb{\bar e})-g_2(\mb{ a}) -g_2(\mb{b})-g_2(\mb{e})=\sixth([a,b] +[b,e]+[e,a]),
\] 
which can be  solved by setting 
\[
g_2(\mb{\bar b})-g_2(\mb{a})= \sixth [a,b]
\]
and similarly for the other cyclic permutations of $a,b,e$.  
The above expression gives a difference formula for $g_2$ (with respect to the boundary cycle), which can be solved by ``integrating'' 
\[
g_2(\mb{e})=\sixth \sum_1^k [f_{i-1},-f_i],
\]
where the sum is over edges in the boundary cycle of $G$ starting at the tail and ending at $\mb{e}$ so that $\f_0=\mb{t}$,  $\f_k=\mb{e}$, and all $\f_i$ are oriented with the direction of the boundary cycle. In this way, for any oriented edge $\mb{x}$ of $G$ we have 
\[
\ell_2(\mb{x})=\sixth \sum_1^k [f_{i-1},f_i],
\]
where the sum is now over the edge-path $\{\f_i\}$ of the boundary cycle connecting $\mb{x}$ to $\mb{\bar x}$ which avoids the tail $\mb{t}$ if such a path exists.  See  Figure \ref{line2}. If such a path does not exist for $\mb x$, then it does for $\bar {\mb x}$, in which case   $\ell_2(\mb{x})=-\ell_2(\mb{\bar x})$. 

\begin{figure}[!h]
\begin{center}
\epsffile{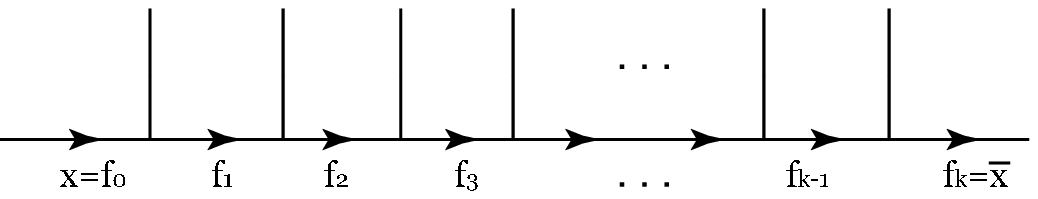}
\caption{Edges from $\mb{x}$ to $\mb{\bar{x}}$}
\label{line2}
\end{center}
\end{figure}

\subsection{The general case}\label{thegeneralcase}
Now we return to the general case armed with the insights of our warm-up $k=2$ and determine a method for recursively defining $\theta_k^G$ for $k>2$.  For this, we again take $\exp(e)$ as a first approximation but now make use of the Hausdorff series \cite{bourbaki} 
for the free Lie algebra $\El$ on $H$ to capture the noncommutativity of \eqref{eq:vertex}.   
If we let $X+Y$ + $h(X,Y)$ denote this series for $X,Y\in\El$, meaning 
\[
X+Y + h(X,Y) = \log (\exp X \exp Y),
\]
then the first few terms are given by 
\begin{equation} \label{eq:haus}
\begin{split}
 h(X, Y)&=\frac12 [X,Y] +\frac{1}{12}[X,[X,Y]] - \frac{1}{12}[Y,[X,Y]]\\
 &-\frac{1}{24}[X,[Y,[X,Y]]] +\dotsm.
 \end{split}
\end{equation}

Now, we shall assume  there exists some $\ell(\mb{x})\in \El$ for each oriented edge $\mb{x}$ of $G$ such that our desired Magnus expansion takes the form 
\[
\theta^G(\mb{x})=\exp(\ell(\mb{x})).
\]
We let $\ell_n(\mb{x})$ denote the $n$th graded component of $\ell(\mb{x})$  and observe that  by defiinition, we must have $\ell_0(\mb{x})=0$ and $\ell_1(\mb{x})=x$.  By the results of the previous section, we may choose  $ \ell_2(\mb{x})$ to be defined as  a combinatorial integral over an edge-path connecting $\mb{x}$ to $\bar{\mb{x}}$ (or its reverse); similarly, we shall see that for all $m\geq 2$, we may choose $ \ell_m(\mb{x})$ to be an iterated combinatorial integral over the same edge path.

From the relation  $\theta^G(\mb{x})\theta^G(\overline{\mb{x}}) = 1$, we must have  $\ell(\overline{\mb{x}}) = - \ell(\mb{x})$, and we again assume $\ell_n(\mb{x}) = g_n(\mb{x}) - g_n(\bar{\mb{x}})$ for some  unknown function $g_n$ when $n > 2$.  From $\theta^G(\overline{\mb{e}}) = \theta^G(\mb{a})\theta^G(\mb{b})$, it  follows that
\begin{equation}\label{eq:hvtxcompatibility}
- \ell(\mb{e}) = \ell(\overline{\mb{e}}) = \ell(\mb{a}) + \ell(\mb{b}) + h(\ell(\mb{a}), \ell(\mb{b})).
\end{equation}
Symmetrizing this relation, we have
\begin{multline}\label{eq:symvtxcompatibility}
-3 (\ell(\mb{a}) + \ell(\mb{b}) + \ell(\mb{e}))=\\
 h(\ell(\mb{b}), \ell(\mb{e})) + h(\ell(\mb{e}), \ell(\mb{a})) + h(\ell(\mb{a}), \ell(\mb{b})),
\end{multline}
hence we choose
\[
3(g_n(\bar{\mb{a}}) - g_n(\mb{e})) = h(\ell(\mb{e}), \ell(\mb{a}))_{(n)}
= h(\ell(\mb{e}), -\ell(\bar{\mb{a}}))_{(n)},
\]
where $_{(n)}$ denotes the $n$-th graded component in $\mathcal{L}$. 

This gives a difference equation for $g_n$, which we can integrate along any edge-path of $G$.  In particular, we have the following:

\begin{definition}\label{defn:ell}
Given a trivalent marked bordered fatgraph $G$, we recursively define a map $\ell^G:\mc{E}_{or}(G)\ra\El$ by setting 
\begin{equation}\label{eq:elln}
\ell^G_n(\mb{x}) = -\frac13 \sum^k_{i=1} h(\ell^G(\f_{i-1}), -\ell^G(\f_i))_{(n)}
\end{equation}
for any oriented edge $\mb{x}$ such that the edge-path $\{\f_i\}$  from $\mb{x}$ to $\mb{\bar x}$ along the boundary cycle avoids the tail.  
If no such path exists for $\mb{x}$, we set  $\ell^G_n(\mb{x})=-\ell^G_n(\mb{\bar x})$.  

Note that  this recursive definition is well-posed since the right hand side of the expression for  $\ell^G_n\in\El_n$ involves only the terms  $\ell^G_k$ with $k<n$.  
\end{definition}

Using this definition for $\ell(\mb{x})$, we have:
\begin{theorem} \label{thm:fatgraphmagnus}
For any  trivalent  marked bordered fatgraph $G$, the map $\theta^G:\mc{E}_{or}(G)\ra\widehat{T}$ given by $\theta^G: \mb{x}\mapsto \exp(\ell^G(\mb{x}))$ extends to a generalized  Magnus expansion such that for $\mb{x}\in\mc{E}_{or}(G)$, $\theta^G(\mb{x})$ depends only on the combinatorics and $H$-marking of $G$.  Moreover, this \emph{fatgraph Magnus expansion}  is canonically defined and $\M1g$-equivariant in the sense that 
\[
\theta^G(\mb{x})=|\varphi^{-1}|~ \theta^{\varphi(G)}(\varphi(\mb{x}))
\]
for all $\varphi\in \M1g$ and  $\mb{x}\in\pi_1$, where $|\varphi^{-1}|$ is the element of $Sp(2g,\bZ)$ induced by $\varphi^{-1}$. 
\end{theorem}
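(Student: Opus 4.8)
The plan is to prove the three assertions in order: that the edge assignment $\mb x \mapsto \theta^G(\mb x) = \exp(\ell^G(\mb x))$ extends to a homomorphism $\pi_1 \to 1 + \widehat T_1$ and hence a generalized Magnus expansion; that on edges it is determined by the combinatorics and $H$-marking alone; and that it satisfies the stated transformation law under $\M1g$. The crux is the first assertion, and within it the verification of the honest vertex compatibility \eqref{eq:hvtxcompatibility}, since Definition~\ref{defn:ell} was designed only to integrate the \emph{symmetrized} relation \eqref{eq:symvtxcompatibility}.

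For the homomorphism property, the orientation condition \eqref{eq:orientation} is immediate, as $\ell^G(\bar{\mb x}) = -\ell^G(\mb x)$ gives $\theta^G(\mb e)\theta^G(\bar{\mb e}) = \exp(\ell^G(\mb e))\exp(-\ell^G(\mb e)) = 1$. The real content is \eqref{eq:vertex}, equivalently \eqref{eq:hvtxcompatibility}, which I would establish by induction on the degree $n$, the cases $n = 1$ (where it reduces to the abelianized vertex relation $a + b + e = 0$) and $n = 2$ (the warm-up above) being already known. Set $X = \ell^G(\mb a)$, $Y = \ell^G(\mb b)$, $Z = \ell^G(\mb e)$ at a trivalent vertex; since $h(X,Y)_{(n)}$ involves only $\ell^G_{<n}$, the inductive hypothesis lets me assume $\exp X\,\exp Y\,\exp Z = 1$ modulo degree $n$. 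Reading Definition~\ref{defn:ell} as the integral of the cyclic difference relations $3(g_n(\bar{\mb a}) - g_n(\mb e)) = h(\ell(\mb e),\ell(\mb a))_{(n)}$ around the three corners of the vertex, one finds that $\ell^G$ automatically satisfies the symmetrized relation, namely $\ell_n(\mb a) + \ell_n(\mb b) + \ell_n(\mb e) = -\frac13\bigl(h(Z,X) + h(X,Y) + h(Y,Z)\bigr)_{(n)}$. The key lemma is then that modulo degree $n$ the three corner terms coincide: from $\exp X\,\exp Y = \exp(-Z)$ and its two cyclic rotations one computes directly that $h(X,Y) = h(Y,Z) = h(Z,X) = -(X + Y + Z)$, so the symmetrized relation collapses to the genuine vertex relation \eqref{eq:hvtxcompatibility} in degree $n$. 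With orientation and vertex compatibility in hand, the presentation of $\pi_1$ furnished by the fatgraph --- a maximal tree together with the orientation and vertex relations, exactly as in the proof of Lemma~\ref{hopfian} --- shows the edge assignment descends to a homomorphism $\pi_1 \to 1 + \widehat T_1$; since $\ell^G_1(\mb x) = x$, it is a generalized Magnus expansion.

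The second assertion is then immediate by induction from \eqref{eq:elln}: the recursion is seeded by $\ell^G_1(\mb x) = x$, which is pure $H$-marking data, and each successive stage references only the boundary edge-path of $G$ and lower-degree values through the universal series $h$, so no information beyond the combinatorics and $H$-marking ever enters.

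Finally, for $\M1g$-equivariance I would use that $G$ and $\varphi(G)$ carry the same underlying combinatorial fatgraph while their $\pi$-markings are related by $\pi_{\varphi(G)}(\mb x) = \varphi(\pi_G(\mb x))$, so the $H$-marking of $\varphi(G)$ is $|\varphi|$ applied to that of $G$. As \eqref{eq:elln} is assembled entirely from Lie brackets it is $GL(H)$-equivariant, and the symplectic automorphism $|\varphi|$ commutes with $\exp$; hence $\ell^{\varphi(G)}(\mb x) = |\varphi|\,\ell^G(\mb x)$ on corresponding edges and $\theta^{\varphi(G)}(\varphi(\mb x)) = |\varphi|\,\theta^G(\mb x)$. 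Applying $|\varphi^{-1}| = |\varphi|^{-1}$ gives $\theta^G(\mb x) = |\varphi^{-1}|\,\theta^{\varphi(G)}(\varphi(\mb x))$ on generators, and since both sides are homomorphisms in $\mb x$ the identity propagates to all of $\pi_1$. I expect the genuine obstacle to lie in the bookkeeping of the second paragraph: confirming that the boundary sum of \eqref{eq:elln} reproduces precisely the three corner contributions at each vertex, with the correct signs and tail conventions, together with the Hausdorff identity $h(X,Y) = h(Y,Z) = h(Z,X)$ that drives the passage from the symmetrized relation to the honest one.
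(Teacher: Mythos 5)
Your overall strategy is the same as the paper's: induct on the degree $n$, exploit the fact that $h(\cdot,\cdot)_{(n)}$ depends only on the components of its arguments in degrees $<n$, and upgrade the symmetrized relation \eqref{eq:symvtxcompatibility} built into Definition~\ref{defn:ell} to the honest relation \eqref{eq:hvtxcompatibility}. But your key lemma, as stated, does not close the induction, and the point where you invoke it is exactly where the real work lies. From the inductive hypothesis $\exp X\exp Y\exp Z\equiv 1 \bmod \widehat T_n$, your ``direct computation'' yields $h(X,Y)\equiv h(Y,Z)\equiv h(Z,X)\equiv -(X+Y+Z)$ only modulo $\widehat T_n$, i.e.\ agreement of the corner terms in degrees $<n$; that much follows trivially from the hypothesis and carries no new information. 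What the collapse of \eqref{eq:symvtxcompatibility} to \eqref{eq:hvtxcompatibility} in degree $n$ actually requires is that the \emph{degree-$n$ components} $h(X,Y)_{(n)}$, $h(Y,Z)_{(n)}$, $h(Z,X)_{(n)}$ coincide, and asserting $h(X,Y)_{(n)}=-(X+Y+Z)_{(n)}$ is literally the degree-$n$ vertex relation you are trying to prove, so reading it off from the mod-$\widehat T_n$ relations is circular.

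The missing ingredient is an exact identity coming from associativity of the group $\exp(\El)$, which is precisely what the paper supplies: the cocycle identity $h(X*Y,Z)+h(X,Y)=h(X,Y*Z)+h(Y,Z)$, where $X*Y=\log(\exp X\exp Y)$. Evaluated in degree $n$, the terms $h(X*Y,Z)_{(n)}$ and $h(X,Y*Z)_{(n)}$ vanish, because $X*Y\equiv -Z$ and $Y*Z\equiv -X$ modulo $\widehat T_n$, because $h(\cdot,\cdot)_{(n)}$ sees its arguments only modulo $\widehat T_n$, and because $h(-W,W)=0$ identically; what remains is exactly the degree-$n$ coincidence $h(X,Y)_{(n)}=h(Y,Z)_{(n)}$. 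Equivalently, in your own language: set $Z^*:=-(X*Y)$, so that $Z^*\equiv Z \bmod \widehat T_n$; one then checks \emph{using associativity} that the triple $(X,Y,Z^*)$ satisfies all three cyclic relations exactly (e.g.\ $\exp Y\exp Z^*=\exp Y\exp(-Y)\exp(-X)=\exp(-X)$), hence its corner terms coincide in every degree, and the substitution property of $h(\cdot,\cdot)_{(n)}$ transfers this coincidence to $(X,Y,Z)$ in degree $n$. Either way, the associativity step must appear explicitly; without it the inductive step does not go through. The rest of your proposal --- orientation, extension to $\pi_1$ via a maximal tree, dependence only on the combinatorics and $H$-marking, and equivariance from naturality of the recursion under the induced action on $H$ --- is correct and matches the paper's (brief) treatment of those points.
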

\begin{proof}
By construction, the mapping $\ell^G_n:\mc{E}_{or}(G)\ra\El$ a priori only satisfies the symmetrized vertex compatibility relation \eqref{eq:symvtxcompatibility} but not necessarily \eqref{eq:hvtxcompatibility}. 
We must check that $\ell^G_n$ is inherently cyclicly  symmetric, meaning that 
\[
h(\ell^G(\mb{e}), \ell^G(\mb{a}))_{(n)} = h(\ell^G(\mb{a}), \ell^G(\mb{b}))_{(n)}.
\]
We prove this by induction and begin with the hypothesis that 
\[
\theta ^G(\mb{e})\theta ^G(\mb{a})\theta ^G(\mb{b}) \equiv 1 \textrm{ mod } \widehat{T}_n.
\]

For convenience, we simply write  $X*Y$ for $\log(\exp X \exp Y)$ if $X,Y \in \mathcal{L}$.  The associative law of the group $\exp(\mathcal{L})$ implies
\[
h(X*Y, Z) + h(X, Y) = h(X, Y*Z) + h(Y, Z)
\]
for any $X, Y$ and $Z \in \mathcal{L}$.

Now, we have
\[
h(\ell^G(\mb{e})*\ell^G(\mb{a}), \ell^G(\mb{b}))_{(n)} = h(-\ell^G(\mb{b}), \ell^G(\mb{b}))_{(n)} = 0
\]
since
\[
-\ell^G(\mb{b}) = \ell^G(\overline{\mb{b}}) = \ell^G(\mb{e})*\ell^G(\mb{a}) \textrm{ mod } \widehat{T}_n
\]
and $h(X, Y)_{(n)}$ is determined by the terms of degree $< n$.
Similarly, $h(\ell^G(\mb{e}), \ell^G(\mb{a})*\ell^G(\mb{b}))_{(n)} = 0$. and hence
\[
h(\ell^G(\mb{e}), \ell^G(\mb{a}))_{(n)} = h(\ell^G(\mb{a}), \ell^G(\mb{b}))_{(n)},
\]
as was to be shown.  

The above argument shows that   $\ell^G$ satisfies the (logarithmic) vertex compatibility condition \eqref{eq:hvtxcompatibility} for all vertices of $G$.  Thus, $\ell^G$ as defined can be extended to a map $\ell^G:\pi_1\ra \El$, and it is clear by construction that $\theta^G=\exp(\ell^G)$ gives a well-defined Magnus expansion satisfying \eqref{eq:orientation} and \eqref{eq:vertex}. 

The remaining statements in the theorem follow by construction since Definition \ref{defn:ell}  requires only the combinatorics and $H$-marking of $G$ and involves no choice of basis for $\pi_1$. 
\end{proof}

We emphasize that the above proof only holds for trivalent bordered fatgraphs.  A similar result can likely be proven for arbitrary bordered fatgraphs by finding symmetrized solutions for higher valence vertex compatibility conditions; however, it seems difficult to find a truly canonical or universal fatgraph Magnus expansion for all bordered fatgraphs which varies continuously under edge collapse.

\subsection{Johnson maps}

As a consequence of Theorem \ref{thm:fatgraphmagnus}, we find  

\begin{corollary}\label{white}
Let 
\[
 G=G_0\xra{W_1} G_1\xra{W_2} G_2 \xra{W_3} \ldots \xra{W_k}G_k =\varphi (G)
\]
be a sequence of (marked) Whitehead moves representing an element $\varphi\in\M1g[m]$ with $m\geq1$. Then
\[
\tau_{m}(\varphi)(x)=\sum_{i=1}^k  \theta_{m+1}^{G_{i-1}}(\mb{x})  -\theta_{m+1}^{G_i}(\mb{x}) 
\]
for any $\mb{x}\in \pi_1$.
\end{corollary}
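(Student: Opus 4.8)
The plan is to reduce the statement to two facts already in hand: Kawazumi's theorem that the $m$th Johnson homomorphism may be computed from \emph{any} generalized Magnus expansion, and the $\M1g$-equivariance of the fatgraph Magnus expansion established in Theorem~\ref{thm:fatgraphmagnus}. Once these are combined, the intermediate graphs $G_1,\dots,G_{k-1}$ will drop out by telescoping, and only the endpoint data of $G_0$ and $G_k=\varphi(G_0)$ will survive. No new manipulation of the Hausdorff series is needed; the corollary is a formal consequence of the machinery of Section~\ref{sect:fatgraphmagnus}.

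First I would invoke, for $\varphi\in\M1g[m]$, the expression of the Johnson homomorphism through a Magnus expansion,
\[
\tau_m(\varphi)(x) = \theta_{m+1}(\varphi(\mb{x})) - \theta_{m+1}(\mb{x}),
\]
valid for any Magnus expansion $\theta$ and independent of that choice. The point that makes this a clean difference, with none of the correction terms that appear for $\tau_2$ on $\I1g$, is that the lower Johnson maps $\tau_j(\varphi)$ vanish for $j<m$ precisely because $\varphi\in\M1g[m]$, so the $(m+1)$st graded component of $\theta\circ\varphi\circ\theta^{-1}$ is the first nontrivial one and behaves additively there. Since each $G_i$ is a trivalent marked bordered fatgraph, each carries its fatgraph Magnus expansion $\theta^{G_i}$, and I would apply the displayed formula using the terminal expansion $\theta=\theta^{G_k}$, giving $\tau_m(\varphi)(x)=\theta^{G_k}_{m+1}(\varphi(\mb{x}))-\theta^{G_k}_{m+1}(\mb{x})$.

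The crux is then to rewrite $\theta^{G_k}_{m+1}(\varphi(\mb{x}))$ in terms of the initial graph. Applying the equivariance relation of Theorem~\ref{thm:fatgraphmagnus} with $G=G_0$ and $\varphi(G_0)=G_k$ yields $\theta^{G_0}(\mb{x})=|\varphi^{-1}|\,\theta^{G_k}(\varphi(\mb{x}))$. Here the decisive observation is that $\M1g[m]\subseteq\I1g$, so $\varphi$ acts trivially on $H$; hence the induced symplectic automorphism $|\varphi^{-1}|$ is the identity on $\widehat T$ and drops out, leaving $\theta^{G_k}(\varphi(\mb{x}))=\theta^{G_0}(\mb{x})$ and, in particular, the equality of $(m+1)$st graded components. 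Substituting back gives $\tau_m(\varphi)(x)=\theta^{G_0}_{m+1}(\mb{x})-\theta^{G_k}_{m+1}(\mb{x})$.

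Finally I would note that every term of the asserted sum is evaluated at the same $\mb{x}$, so $\sum_{i=1}^k\bigl(\theta^{G_{i-1}}_{m+1}(\mb{x})-\theta^{G_i}_{m+1}(\mb{x})\bigr)$ telescopes to exactly $\theta^{G_0}_{m+1}(\mb{x})-\theta^{G_k}_{m+1}(\mb{x})$, matching the expression just obtained. I expect the only genuinely delicate point to be the bookkeeping around the indices and the vanishing of $|\varphi^{-1}|$: one must apply the equivariance formula with the correct source and target graphs, and be sure that the symplectic factor truly disappears because $\varphi$ lies in the Torelli group. Everything else is formal, following from independence of the Magnus expansion together with equivariance, so I anticipate no computational obstacle.
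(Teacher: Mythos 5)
Your proof is correct and follows essentially the same route as the paper's: both rest on the formula $\tau_m(\varphi)(x)=\theta_{m+1}(\varphi(\mb{x}))-\theta_{m+1}(\mb{x})$ for an arbitrary Magnus expansion, the $\M1g$-equivariance of Theorem~\ref{thm:fatgraphmagnus} (with the symplectic factor trivial since $\M1g[m]\subseteq\I1g$), and a telescoping sum. The only cosmetic difference is that you apply the defining formula directly with the terminal expansion $\theta^{G_k}$, whereas the paper starts with $\theta^{G_0}$ and then invokes independence of the choice of expansion to pass to $\theta^{\varphi(G)}$ before using equivariance.
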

\begin{proof}
For any $\mb{x}\in \pi_1$, we have 
\[
\begin{split}
\tau_{m}(\varphi)(x)&=\theta_{m+1}^{G}(\varphi (\mb{x})) -\ThG{m+1}{G}{x}\\ 
&=\theta_{m+1}^{\varphi (G)}(\varphi(\mb{x})) -\ThG{m+1}{\varphi (G)}{x}\\ 
&=\ThG{m+1}{G}{ x} -\ThG{m+1}{\varphi (G)}{x}\\
&=\sum_{i=1}^k \bigl [ \theta_{m+1}^{G_{i-1}}(\mb{x})  -\theta_{m+1}^{G_i}(\mb{x}) \bigr ],
\end{split}
\]
where the first line follows from the definition,  the second line follows since  $\tau_m$ is independent of the Magnus expansion, the third line follows from the $\M1g$-equivariance of $\theta^G$ given by Theorem \ref{thm:fatgraphmagnus}, and the last line follows since the sum telescopes.  
\end{proof}

Thus, by the $\M1g$-equivariance of the fatgraph Magnus expansion, we  have the following  immediate consequence:

\begin{corollary}\label{lifts}
The $m$th Johnson homomorphism $\tau _m$  lifts to the Torelli groupoid, for $m\geq 1$.
\end{corollary}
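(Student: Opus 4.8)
The plan is to make the lift completely explicit as a groupoid $1$-cochain built from the fatgraph Magnus expansion, and then to read off $\tau_m$ from it by invoking Corollary~\ref{white}. First I would define, for every oriented $1$-cell of $\hat{\mc{G}}_T$---that is, for every Whitehead move $W\colon G\to G'$ between trivalent marked bordered fatgraphs---the transition datum recording the change in the fatgraph Magnus expansion, namely the assignment $\mb{x}\mapsto \theta_{m+1}^{G}(\mb{x})-\theta_{m+1}^{G'}(\mb{x})$ (equivalently the change-of-expansion automorphism $\theta^{G'}\circ(\theta^{G})^{-1}\in IA(\widehat T)$, whose $(m{+}1)$-st graded component carries exactly this information). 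By Theorem~\ref{thm:fatgraphmagnus}, each $\theta^{G}$ depends only on the combinatorics and $H$-marking of $G$ and involves no choice of basis, so this assignment is well defined on $1$-cells.

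Next I would verify descent to the $m$th Torelli groupoid, which is the fundamental path groupoid of $|\mc{G}_T|/\M1g[m]$. The equivariance statement of Theorem~\ref{thm:fatgraphmagnus} reads $\theta^{G}(\mb{x})=|\varphi^{-1}|\,\theta^{\varphi(G)}(\varphi(\mb{x}))$; restricting to $\varphi\in\M1g[m]\subseteq\I1g$, the induced symplectic element $|\varphi^{-1}|$ is the identity, so $\theta^{\varphi(G)}(\varphi(\mb{x}))=\theta^{G}(\mb{x})$. Hence the transition datum attached to $\varphi(W)\colon\varphi(G)\to\varphi(G')$, evaluated at $\varphi(\mb{x})$, agrees with the datum attached to $W$ evaluated at $\mb{x}$; and since $\varphi$ acts trivially on $H=[\pi_1]$, the induced $H$-valued assignment is $\M1g[m]$-invariant. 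Therefore the cochain descends to the quotient complex $\hat{\mc{G}}_T/\M1g[m]$, i.e.\ to the $m$th Torelli groupoid.

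Finally I would evaluate on loops. An element $\varphi\in\M1g[m]$ is represented by a path of Whitehead moves $G=G_0\xra{W_1}\cdots\xra{W_k}G_k=\varphi(G)$; summing (respectively composing) the transition data along this path telescopes, and by Corollary~\ref{white} the result is precisely $\tau_m(\varphi)(x)$ for all $x\in H$. Thus the descended cochain computes $\tau_m$ on the stabilizer loops of the groupoid, which is exactly what it means for $\tau_m$ to lift. The whole argument is therefore immediate from Theorem~\ref{thm:fatgraphmagnus} together with the telescoping identity of Corollary~\ref{white}.

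The one point demanding care---and the honest obstacle---is the nature of the coefficients rather than any computation. For $m=1,2$ the transition data can be organized into genuine abelian $1$-cocycles, as exploited in Theorem~\ref{t2exists}, but for general $m$ the natural target is the non-abelian group $IA(\widehat T)$ under composition, so what one obtains is a groupoid representation, i.e.\ a cocycle with non-abelian coefficients, rather than an honest abelian $1$-cocycle. I would be explicit that the assertion of Corollary~\ref{lifts} is understood in precisely this weaker sense of an invariant of based homotopy classes of paths in $|\mc{G}_T|/\M1g[m]$; once the coefficients are interpreted this way, no further work beyond the two cited results is needed.
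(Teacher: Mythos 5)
Your proposal is correct and follows essentially the same route as the paper: the paper derives Corollary~\ref{lifts} as an immediate consequence of the telescoping formula of Corollary~\ref{white} together with the $\M1g$-equivariance of the fatgraph Magnus expansion from Theorem~\ref{thm:fatgraphmagnus}, which is precisely the descent-plus-telescoping argument you spell out. Your closing caveat about the coefficients also matches the paper's own caveat that for general $m$ the lift is only a ``cocycle with non-abelian coefficients'' (values in $IA(\widehat T)$, organized there via the fatgraph Johnson maps $\tau^{GG'}=\theta^{G}\circ(\theta^{G'})^{-1}$), honest abelian $1$-cocycles being obtained only for $m=1,2$.
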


While Corollary \ref{white} certainly proves such a lift exists, the resulting expression is in many ways unsatisfying.   The bulk of the rest of this paper is an explication of these
lifts and a massaging of the formulae to more satisfactory expressions for $\tau _m$ in general and for $m=1,2,3$ in particular.

 One respect in which the lifts  of Corollary \ref{white} are lacking is that the summands  $\theta^{G_{i-1}}_{m+1}(\mb{x})-\theta^{G_{i}}_{m+1}(\mb{x})$ are not necessarily linear in $x$.  This motivates the following definition.  
 
\begin{definition} 
Given a Whitehead move $W:G\ra G'$, we define the \emph{fatgraph Johnson map} $\tau^{GG'}=\tau(W)$ to be the element of $IA(\widehat T)\cong \Hom(H,\widehat T_2)$ taking $\theta^{G'}$ to $\theta^{G}$:
 \[
\tau^{GG'}=\theta^{G}\circ(\theta^{G'})^{-1}.
 \]
\end{definition} 

With this definition, Corollary~\ref{white}, for $m=1,2$, can be put into the following more desirable form:
\begin{theorem}
In the notation of Corollary~\ref{white}, we have 
 $$\tau _1(\varphi)=\sum_{i=1}^k \tau_1^{G_{i-1}G_i},~for~\varphi\in{\mathcal M}_{g,1},$$ and
\begin{equation}\label{eq:localtau2sum}
\tau_2(\varphi)=\sum_{i=1}^k \tau_2(W_i)+( \tau_1(W_i)\otimes1+1\otimes\tau_1(W_i))\circ\sum_{j=1}^i\tau_1(W_j),
\end{equation}
for $\varphi\in\mc{K}_{g,1}={\mathcal M}_{g,1}[2]$.

\end{theorem}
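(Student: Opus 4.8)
The plan is to prove both formulae by combining Corollary~\ref{white} with the defining relation $\tau^{GG'}=\theta^{G}\circ(\theta^{G'})^{-1}$, reading off the graded components. The $m=1$ case is essentially a restatement: for $\varphi\in\M1g=\M1g[1]$, Corollary~\ref{white} gives $\tau_1(\varphi)(x)=\sum_i(\theta_2^{G_{i-1}}(\mb{x})-\theta_2^{G_i}(\mb{x}))$, and I would observe that since $\theta^{G_{i-1}}$ and $\theta^{G_i}$ agree in degree~$1$ (both send $\mb{x}\mapsto 1+x+\cdots$), the element $\tau^{G_{i-1}G_i}=\theta^{G_{i-1}}\circ(\theta^{G_i})^{-1}\in IA(\widehat T)$ has degree-$2$ component $\tau_1^{G_{i-1}G_i}(x)=\theta_2^{G_{i-1}}(\mb{x})-\theta_2^{G_i}(\mb{x})$, exactly the summand above. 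This identification of graded pieces of $\tau(W)$ with differences of $\theta$-components is the technical core I would isolate once and reuse.

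For $m=2$ the work is in bookkeeping the degree-$3$ terms. I would expand $\tau^{G_{i-1}G_i}=\theta^{G_{i-1}}\circ(\theta^{G_i})^{-1}$ graded by degree: writing $A=\tau_1(W_i)$ for the degree-$2$ part and $B=\tau_2(W_i)$ for the degree-$3$ part of this $IA(\widehat T)$-element, a direct expansion shows the degree-$3$ component of $\theta_3^{G_{i-1}}(\mb{x})-\theta_3^{G_i}(\mb{x})$ equals $B(x)+(A\otimes 1+1\otimes A)\circ\theta_2^{G_i}(\mb{x})$, where the cross term arises because applying the degree-raising map $A$ to the degree-$2$ part $\theta_2^{G_i}(\mb{x})$ contributes in degree~$3$ via the two tensor slots. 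I would then telescope: summing over $i$, the quantity $\theta_2^{G_i}(\mb{x})$ telescopes against the running sum $\sum_{j\le i}\tau_1(W_j)=\theta_2^{G_0}(\mb{x})-\theta_2^{G_i}(\mb{x})$ composed into the expression, which after re-indexing produces precisely $\sum_i \tau_2(W_i)+(\tau_1(W_i)\otimes 1+1\otimes\tau_1(W_i))\circ\sum_{j=1}^i\tau_1(W_j)$. Here I would lean on the fact that on $\mc{K}_{g,1}=\M1g[2]$ the first Johnson homomorphism vanishes, so $\tau_1(\varphi)=\sum_i\tau_1(W_i)=0$, which is what makes the telescoping collapse to the stated closed form rather than leaving boundary terms.

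The main obstacle, and the step deserving genuine care, is tracking the cross term $(A\otimes 1+1\otimes A)\circ\theta_2^{G_i}$ correctly through the non-abelian composition in $IA(\widehat T)$ and confirming that the inner sum is $\sum_{j=1}^i$ rather than $\sum_{j=1}^{i-1}$ or a symmetric average. This is the same $\tau_2$-formula structure already recorded in the excerpt for a single Magnus expansion, namely $\tau_2(\varphi)(x)=\theta_3(\varphi(\mb{x}))-\theta_3(\mb{x})-(\tau_1(\varphi)\otimes 1+1\otimes\tau_1(\varphi))\circ\theta_2(\mb{x})$, so I would model the per-edge expansion on that identity applied formally to each Whitehead move and verify the signs and index ranges match by checking the base case $k=1$ and the inductive step $k\to k+1$ explicitly. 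The composition law in $IA(\widehat T)$ (with its semidirect structure given by the pairing into $\mathcal H_2$) governs exactly how these cross terms accumulate, and making the index range unambiguous is where the $\sim$-relation and the skew pairing recalled in Section~\ref{sect:cochainreps} enter implicitly; I would keep the computation at the level of graded tensor components so that these identifications are automatic rather than imposed by hand.
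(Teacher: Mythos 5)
Your proposal is correct and takes essentially the same route as the paper: the same graded identities expressing $\tau_1(W_i)(x)=\theta_2^{G_{i-1}}(\mb{x})-\theta_2^{G_i}(\mb{x})$ and $\tau_2(W_i)(x)=\theta_3^{G_{i-1}}(\mb{x})-\theta_3^{G_i}(\mb{x})-(\tau_1(W_i)\otimes 1+1\otimes\tau_1(W_i))\,\theta_2^{G_i}(\mb{x})$, the same telescoping via Corollary~\ref{white}, the same substitution $\theta_2^{G_i}(\mb{x})=\theta_2^{G_0}(\mb{x})-\sum_{j\le i}\tau_1(W_j)(x)$, and the same use of $\tau_1(\varphi)=0$ on $\mc{K}_{g,1}$ to kill the boundary term. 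Two small caveats: carried out carefully, this bookkeeping produces a \emph{minus} sign on the cross terms (exactly as in the last line of the paper's own proof, which disagrees with the sign printed in the theorem statement---no re-indexing removes it, since $\sum_j\tau_1(W_j)=0$ only converts $\sum_{j\le i}$ into $-\sum_{j>i}$), and the $\sim$-relation and skew pairing into $\mathcal{H}_2$ from Section~\ref{sect:cochainreps} play no role in this theorem, which lives entirely in $H\otimes\mathcal{L}$.
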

\begin{proof}
First note that by definition we have
\begin{equation}\label{eq:tau1difference}
\tau_1^{GG'}(x)=\theta^G_2(\mb{x})-\theta^{G'}_2(\mb{x})
\end{equation}
and
\begin{equation}\label{eq:tau2difference}
\tau_2^{GG'}(x)=\theta^G_3(\mb{x})-\theta^{G'}_3(\mb{x})-( \tau_1^{GG'}\otimes 1+1\otimes\tau_1^{GG'}) \theta^{G'}_2(\mb{x}).
\end{equation}

It follows immediately that
\[
\\
\tau_1(\varphi)(x)=\sum_{i=1}^k \tau_1(W_i)(x)\!,
\]
and
\[
\begin{split}
\tau_2(\varphi)(x)&=\sum_{i=1}^k \ThG{3}{G_{i-1}}{x}  -\ThG{3}{G_i}{x}  \\
&=\sum_{i=1}^k \tau_2^{G_{i-1}G_i}(x)+( \tau^{G_{i-1}G_i}_1\otimes1+1\otimes\tau^{G_{i-1}G_i}_1)\ThG{2}{G_i}{x}\\
&=\sum_{i=1}^k \tau_2(W_i)(x)-\left( \tau_1(W_i)\otimes1+1\otimes\tau_1(W_i)\right)\circ\sum_{j=1}^i\tau_1(W_j)(x)
\end{split}
\]
since the sum
$
\sum_{i=1}^k (\tau_1(W_i)\otimes1+1\otimes\tau_1(W_i) )\theta^G_2(\mb{x})
$
telescopes to zero for $\varphi\in\mc{K}_{g,1}$.  
\end{proof}

\section{Invariant Tensors}
\label{sect:invarianttensors}

We now turn towards finding explicit representatives of the fatgraph Johnson maps. 
 First, we fix a Whitehead move $W:G\ra G'$ and denote   $\tau^{GG'}$ simply by $\tau$.  Write $\theta$ and $\theta'$ for $\theta^{G}$ and $\theta^{G'}$, respectively, and set $\ell=\log(\theta)$ and $\ell'=\log(\theta')$.   

Using this notation, we can rewrite  the definition of $\tau=\tau^{GG'}$ as $\tau \theta'=\theta$, which is equivalent to the relation $\tau \ell'=\ell$.  Since 
 $h(\cdot  ,\cdot  )_{(1)} = 0$, \eqref{eq:elln} gives 
\[
\ell'(\mb{x}) = x -(1/3)\sum^{k'}_{i=1} h(\ell'(\f'_{i-1}), -\ell'(\f'_i)).
\]

Combining these relations and using the fact that $\tau$ is an algebra automorphism, we have 
\[
\begin{split}
\ell (\mb{x}) = \tau\ell'(\mb{x})
&= \tau(x) -(1/3)\sum^{k'}_{i=1} h(\tau\ell'(\f'_{i-1}), -\tau\ell'(\f'_i))\\
&= \tau(x) -(1/3)\sum^{k'}_{i=1} h(\ell(\f'_{i-1}), -\ell(\f'_i)).
\end{split}
\]
The sum in the last line is taken over an edge-path $\{\f'_i\}$  in $G'$ while the summands involve the iterated integrals $\ell$ of $G$.  

It is important to note that $\ell(\f'_i)$ is well-defined for all $\f'_i$ in $G'$ because $\ell$ is defined on the whole $\pi_1$ and $\f'_i \in \pi_1$.  On the other hand, most oriented  edges $\f'_i$ in the edge-path of $G'$ can be naturally associated to oriented edges of $G$ and hence have natural combinatorial interpretations in terms of $G$.  

Define
\[
\tilde\ell(\mb{x}) := x -(1/3)\sum^{k'}_{i=1} h(\ell(\f'_{i-1}), -\ell(\f'_i)),
\]
for any oriented edge $\mb{x}$ of $G$, and extend as just discussed
for any element $\mb{x}\in \pi_1$.

We can then summarize the results of this section:
\begin{theorem}\label{thm:tauisdeltaell}
$\tau^{GG'}(x)=x+\ell(\mb{x})-\tilde\ell(\mb{x})$,
for any  $\mb{x}\in\pi_1$, and in particular, we have
\[
\tau^{GG'}_{m}(x)=\ell_{m+1}(\mb{x})-\tilde\ell_{m+1}(\mb{x})
\]
for all $m\geq 1$.
\end{theorem}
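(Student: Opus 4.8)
The plan is to read both claimed identities directly off the computation displayed immediately before the statement; the only genuine work is to recognize the combinatorial sum as $x-\tilde\ell$, to extract graded components, and to promote the resulting edge identity to all of $\pi_1$. First, recall that by definition $\tau^{GG'}=\theta\circ(\theta')^{-1}$ is the algebra automorphism of $\widehat T$ satisfying $\tau^{GG'}\theta'=\theta$; taking logarithms and using that an algebra automorphism commutes with $\log$ on group-like elements gives $\tau^{GG'}\ell'=\ell$ on all of $\pi_1$. The crucial structural input is that $\tau^{GG'}$, being an algebra automorphism, commutes with every Lie bracket and hence with the entire Hausdorff series $h(\cdot,\cdot)$, while on the degree-one part $\ell'_1(\mb{x})=x$ it acts by $x\mapsto\tau^{GG'}(x)$. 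Applying $\tau^{GG'}$ term by term to the recursion \eqref{eq:elln} for $\ell'$ along the edge-path $\{\f'_i\}$ of $G'$ running from $\mb{x}$ to $\bar{\mb{x}}$ therefore yields exactly the displayed relation
\[
\ell(\mb{x})=\tau^{GG'}\ell'(\mb{x})=\tau^{GG'}(x)-\frac13\sum_{i=1}^{k'} h(\ell(\f'_{i-1}),-\ell(\f'_i)).
\]
By the very definition of $\tilde\ell$ the sum on the right equals $x-\tilde\ell(\mb{x})$, so rearranging gives $\tau^{GG'}(x)=x+\ell(\mb{x})-\tilde\ell(\mb{x})$, which is the first assertion for any oriented edge $\mb{x}$ of $G$.

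For the graded statement I would extract the degree-$(m+1)$ component of this identity. Since $x$ is homogeneous of degree one and, by construction, $\ell_1(\mb{x})=\tilde\ell_1(\mb{x})=x$, the degree-one parts cancel and for every $m\geq1$ the degree-$(m+1)$ component reads $\tau^{GG'}_m(x)=\ell_{m+1}(\mb{x})-\tilde\ell_{m+1}(\mb{x})$, under the identification $IA(\widehat T)\cong\Hom(H,\widehat T_2)$ (here $\tau^{GG'}_m$ is, as in Section~2, the degree-$(m+1)$ part of $\tau^{GG'}|_H-\id$, which for $m\geq1$ agrees with the degree-$(m+1)$ part of $\tau^{GG'}(x)$). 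It then remains to pass from oriented edges to arbitrary $\mb{x}\in\pi_1$: here I would use that $\ell$ and $\tilde\ell$ are each compatible with the group multiplication of $\pi_1$ (the former because $\theta=\exp\ell$ is a homomorphism, the latter through the extension of $\tilde\ell$ to $\pi_1$ recorded just before the statement) together with the fact that $\tau^{GG'}$ is an algebra automorphism, so that the identity, once known on the edges of $G$, which generate $\pi_1$ by the full rank condition, propagates multiplicatively to all of $\pi_1$.

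The main obstacle is precisely this last extension step. The recursion \eqref{eq:elln}, and hence the combinatorial expression for $\tilde\ell$, is intrinsically tied to the boundary edge-path from $\mb{x}$ to $\bar{\mb{x}}$ and so is defined a priori only for oriented edges; one must check that the extension of $\tilde\ell$ to all of $\pi_1$ is consistent with the algebraic relation $\ell(\mb{x})-\tilde\ell(\mb{x})=\tau^{GG'}(x)-x$, equivalently that the combination $\ell(\mb{x})-\tilde\ell(\mb{x})$ depends only on the homology class $x=[\mb{x}]$. Everything else, including the commutation of $\tau^{GG'}$ with $h$ and the telescoping already carried out, is assembled in the computation preceding the statement, so that once this compatibility is in hand the proof reduces to the bookkeeping above.
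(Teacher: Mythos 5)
Your argument is correct and essentially identical to the paper's: there the theorem carries no separate proof but is stated as a summary of the computation immediately preceding it, in which one applies the algebra automorphism $\tau^{GG'}$ to the recursion for $\ell'$, commutes it past the Hausdorff series to obtain $\ell(\mb{x})=\tau^{GG'}(x)-\frac13\sum_{i=1}^{k'}h(\ell(\f'_{i-1}),-\ell(\f'_i))$, and recognizes the sum as $x-\tilde\ell(\mb{x})$, exactly as you do, with the graded statement following by taking degree-$(m+1)$ parts. Your ``main obstacle'' --- consistency of the extension of $\tilde\ell$ to all of $\pi_1$ --- is not something the paper separately resolves (and your proposed multiplicative propagation is not what is intended): $\tilde\ell$ is simply \emph{defined} on general elements of $\pi_1$ by the extension ``as just discussed,'' so the identity there is automatic, and the substantive content of the theorem lives on oriented edges.
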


\subsection{Tensor representatives}\label{sec5}

We now wish to use Theorem \ref{thm:tauisdeltaell} to give explicit tensor formulae for the Johnson maps $\tau_k^{GG'}$ in terms of the combinatorics and  $H$-marking of $G$.  
For this, we consider the Whitehead move on an edge $\mb{e}$ as shown in Figure \ref{fig:caseI} and consider the effects of $\tau^{GG'}$ on some edge $\mb{x}\neq\mb{e}$ of $G$.  Since the $\pi$-markings on the collection of edges other than $\mb{e}$ generate $\pi_1$, this will suffice to determine $\tau^{GG'}$ completely.

If $\mb{x}\neq\mb{t}$, we shall without loss of generality assume that the edge-path connecting $\mb{x}$ to $\bar{\mb{x}}$ avoids the tail $\mb{t}$, while if $\mb{x}=\mb{t}$, then the edge-path is precisely the boundary cycle starting at $\mb{t}$.   This edge-path may pass through any of the four sectors surrounding the edge $\mb{e}$ in any order.  We shall label the sectors  $(\mb{d}, \mb{e}, \mb{\bar{a}})$, $(\mb{a},\bar{\mb{b}})$, $(\mb{b},\bar{\mb{e}},\bar{\mb{c}})$, and $(\mb{c},\bar{\mb{d}})$ by  I, II, III, and IV, respectively, as in Figure~\ref{fig:caseI}.

\begin{figure}[!h]
\begin{center}
\epsffile{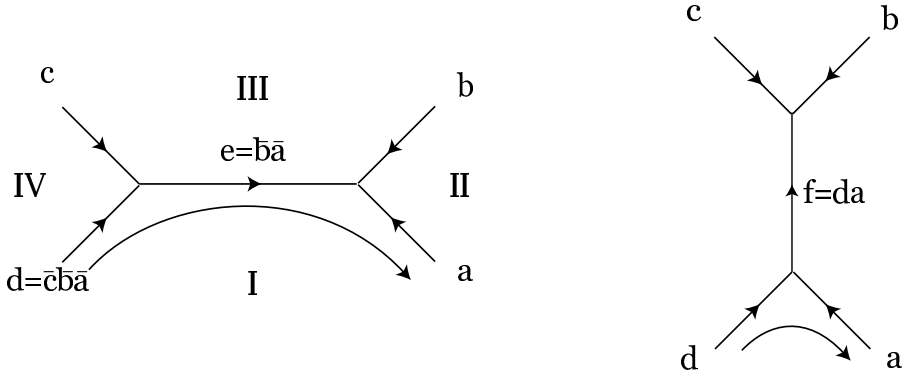} 
\caption{A Whitehead move.}
\label{fig:caseI}
\end{center}
\end{figure}

By Theorem \ref{thm:tauisdeltaell}, the value of $\tG_m(x)$  is given by  the  difference of two  sums $\ell_{m+1}(\mb{x})$ and $\tilde \ell_{m+1}(\mb{x})$ taken over edge paths of  $G$ and $G'$ respectively.
Since the summands in each case are the same away from the edge $\mb{e}$, these ``non-local'' contributions will cancel.  Thus, the difference will only pick up a contribution as the edge-path passes through one of the four sectors.  Let  us denote each of these contributions by $\tG_m(I)$, $\tG_m(II)$, $\tG_m(III)$, and $\tG_m(IV)$, so for example, if the path passes only through the sectors  $(\mb{a},\bar{\mb{b}})$ and $(\mb{b},\bar{\mb{e}},\bar{\mb{c}})$, then  $\tG_m(x)=\tG_m(II)+\tG_m(III)$.  
   
 Now consider the case $\mb{x}=\mb{t}$.  The path connecting $\mb{t}$ to $\bar{\mb{t}}$ must pass through all four sectors, and $\tG_m(t)=0$ since the tail is $H$-marked zero and  $\tG_m$ is by definition linear in $H$.  These two facts together immediately  imply 
    \begin{lemma}\label{lem:locality}
    \[
   0=\tG_m(I)+\tG_m(II)+\tG_m(III)+\tG_m(IV).
 \]
   \end{lemma}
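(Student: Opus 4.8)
The plan is to read off Lemma~\ref{lem:locality} as the specialization of Theorem~\ref{thm:tauisdeltaell} to the tail $\mb{t}$, combined with two elementary observations about the edge-path of the boundary cycle. The first observation is purely combinatorial: since $G$ has a single boundary cycle, the edge-path connecting $\mb{t}$ to $\bar{\mb{t}}$ is precisely the full boundary cycle, and under the Whitehead move $W:G\to G'$ only the four sectors $\mathrm{I},\mathrm{II},\mathrm{III},\mathrm{IV}$ surrounding the collapsed edge $\mb{e}$ are affected. Away from these sectors the summands of $\ell_{m+1}$ (computed along the boundary cycle of $G$) and of $\tilde\ell_{m+1}$ (computed along the boundary cycle of $G'$) agree term by term, so in the difference $\tG_m(t)=\ell_{m+1}(\mb{t})-\tilde\ell_{m+1}(\mb{t})$ of Theorem~\ref{thm:tauisdeltaell} all non-local contributions cancel. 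What survives is exactly the sum of the four sector contributions, by the very definition of the symbols $\tG_m(\mathrm{I}),\dots,\tG_m(\mathrm{IV})$ as the local discrepancies picked up when the path traverses each sector.

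The second observation supplies the left-hand side. Because the boundary cycle for $\mb{t}$ starts and ends at the tail, it passes through all four sectors, so $\tG_m(t)=\tG_m(\mathrm{I})+\tG_m(\mathrm{II})+\tG_m(\mathrm{III})+\tG_m(\mathrm{IV})$. On the other hand, $\tG_m$ is by construction an element of $IA(\widehat T)\cong\Hom(H,\widehat T_2)$, hence linear in the $H$-marking of its argument; since the tail carries the trivial $H$-marking $t=[\mb{t}]=0$ by the geometricity condition $\pi(\mb{\bar t})=\partial$ (the boundary is null-homologous), linearity forces $\tG_m(t)=0$. Equating the two expressions for $\tG_m(t)$ yields
\[
0=\tG_m(\mathrm{I})+\tG_m(\mathrm{II})+\tG_m(\mathrm{III})+\tG_m(\mathrm{IV}),
\]
as claimed.

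The main obstacle I anticipate is not in the final equation but in justifying cleanly that the difference of the two sums localizes \emph{exactly} to the four sectors. One must argue that each oriented edge $\f'_i$ in the boundary edge-path of $G'$ either coincides with a corresponding edge of $G$ (so its summand is identical on both sides) or else lies among the finitely many half-edges adjacent to $\mb{e}$ and its replacement $\mb{f}$, where the local pictures of $G$ and $G'$ differ as in Figure~\ref{fig:caseI}. The preceding discussion already notes that $\ell(\f'_i)$ is well-defined for all $\f'_i$ because $\ell$ extends to all of $\pi_1$, and that most $\f'_i$ have natural combinatorial interpretations as edges of $G$; the care required is to verify that the consecutive-pair structure of the Hausdorff summands $h(\ell(\f'_{i-1}),-\ell(\f'_i))$ matches the corresponding pairs in the $\ell_{m+1}(\mb{t})$ sum outside the sectors, so that the cancellation is term-by-term rather than merely in aggregate. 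Once this bookkeeping is in place, the linearity argument and the vanishing $t=0$ complete the proof with no further computation.
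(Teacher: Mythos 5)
Your proposal is correct and takes essentially the same approach as the paper: the paper likewise obtains the lemma by specializing to $\mb{x}=\mb{t}$, noting that the boundary edge-path from $\mb{t}$ to $\bar{\mb{t}}$ traverses all four sectors so that $\tG_m(t)=\tG_m(I)+\tG_m(II)+\tG_m(III)+\tG_m(IV)$, and concluding $\tG_m(t)=0$ from the linearity of $\tG_m$ in $H$ together with the zero $H$-marking of the tail. Your added bookkeeping about term-by-term cancellation of the non-local summands is a point the paper asserts without elaboration, but it is the same localization argument.
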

   While seemingly simple, this lemma fundamentally depends upon the ``linearity with respect to sectors'' of our fatgraph Johnson maps which is provided by  
  Theorem \ref{thm:tauisdeltaell}, and this relation will allow us to find an explicit tensor representation for $\tG_m$.

\begin{lemma}  For a Whitehead move labeled as in Figure \ref{fig:caseI}, $\tG_m$ can be represented as  
    \begin{equation}\label{eq:sumofcases}
  \begin{split}
  \tG_m
  =-a\otimes \tG_m(I) - b\otimes (\tG_m(II)+\tG_m(I)) + c\otimes  \tG_m(IV) .
  \end{split}
  \end{equation}  
\end{lemma}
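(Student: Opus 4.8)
The plan is to establish the asserted formula as an identity of homomorphisms $H \to H^{\otimes(m+1)}$, under the \Poin duality identification $\Hom(H,H^{\otimes(m+1)}) \cong H\otimes H^{\otimes(m+1)}$ whereby a tensor $h\otimes T$ acts by $x\mapsto (h\cdot x)\,T$. Since the homology classes of the oriented edges $\mb{x}\neq\mb{e}$ span $H$, and since both sides are linear in $x$ --- the right-hand side through the intersection pairing and the left-hand side by the ``linearity with respect to sectors'' afforded by Theorem \ref{thm:tauisdeltaell} --- it suffices to check the equality on these edge classes. First I would record the local boundary combinatorics: reading Figure \ref{fig:caseI}, the boundary cycle visits the ten oriented edges surrounding $\mb{e}$ in the cyclic order
\[
\mb{d}, \mb{e}, \bar{\mb{a}}, \mb{a}, \bar{\mb{b}}, \mb{b}, \bar{\mb{e}}, \bar{\mb{c}}, \mb{c}, \bar{\mb{d}},
\]
the four sectors $I,II,III,IV$ occupying the marked corners and every remaining edge lying in one of the four gaps between consecutive sectors. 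By the discussion preceding Lemma \ref{lem:locality}, $\tG_m(x)=\sum_S n_S(x)\,\tG_m(S)$, where $n_S(x)\in\{0,1\}$ records whether the tail-avoiding boundary edge-path from $\mb{x}$ to $\bar{\mb{x}}$ runs through sector $S$.

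Next I would invoke Lemma \ref{lem:locality} to replace $\tG_m(III)=-\tG_m(I)-\tG_m(II)-\tG_m(IV)$, so that
\[
\tG_m(x)=(n_I-n_{III})\tG_m(I)+(n_{II}-n_{III})\tG_m(II)+(n_{IV}-n_{III})\tG_m(IV).
\]
Comparing with the claimed expression reduces everything to the three scalar identities
\[
n_I-n_{III}=-(a\cdot x)-(b\cdot x),\qquad n_{II}-n_{III}=-(b\cdot x),\qquad n_{IV}-n_{III}=c\cdot x,
\]
to be verified for each edge $\mb{x}$. Here the combinatorics does the work: from the cyclic order above, $\mb{a}$ and $\bar{\mb{a}}$ bound exactly the gap between sectors $I$ and $II$, and likewise $\mb{b},\bar{\mb{b}}$, $\mb{c},\bar{\mb{c}}$, and $\mb{d},\bar{\mb{d}}$ bound the gaps between $II$ and $III$, between $III$ and $IV$, and between $IV$ and $I$, respectively. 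By Proposition \ref{geomh} the pairing $a\cdot x=\langle\mb{a},\mb{x}\rangle$ is therefore nonzero exactly when an endpoint of $\mb{x}$ falls in the gap guarded by $a$, with sign determined by which of $\mb{x},\bar{\mb{x}}$ it is; and tracing which corner points the arc from $\mb{x}$ to $\bar{\mb{x}}$ separates expresses each difference $n_S-n_{III}$ through exactly these endpoint locations. I would also record the vertex relations $a+b+e=0$ and $c+d-e=0$, giving $a+b+c+d=0$; this renders $d$ redundant, so that only $a,b,c$ appear, and serves as a consistency check.

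The main obstacle is the sign-and-position bookkeeping in this last step. The raw indicators $n_S(x)$ depend on where the tail sits and on whether the tail-avoiding path runs from $\mb{x}$ to $\bar{\mb{x}}$ or in the reverse direction (in the latter case one uses $\ell^G(\mb{x})=-\ell^G(\bar{\mb{x}})$ and negates the contribution). The point to be checked, by running through the placements of the two endpoints of $\mb{x}$ and of the tail among the four gaps, is that these dependencies cancel in the differences $n_S-n_{III}$, leaving precisely the tail-independent intersection numbers on the right. Maintaining a fixed orientation convention for $\langle\cdot,\cdot\rangle$ throughout is where care is most needed; with the three scalar identities in hand, linearity together with the spanning property of the edge classes finishes the proof.
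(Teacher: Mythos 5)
Your proposal is correct and takes essentially the same route as the paper's proof: sector-linearity of $\tG_m$ coming from Theorem \ref{thm:tauisdeltaell}, Lemma \ref{lem:locality} to eliminate the contribution of sector III, the combinatorial intersection pairing of Proposition \ref{geomh}, and \Poin duality together with the spanning property of the edge classes. The only organizational difference is that the paper verifies the formula on the four one-sector basic cases (recording the intersection vectors \eqref{eq:intnums}) and extends by sector-additivity, whereas you verify the equivalent three coefficient identities $n_I-n_{III}=-(a\cdot x)-(b\cdot x)$, $n_{II}-n_{III}=-(b\cdot x)$, $n_{IV}-n_{III}=c\cdot x$ for general edges via the linking description of the pairing --- the same computation in a slightly different order.
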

\begin{proof}
We exploit the fact that 
the intersection pairing of oriented edges is also linear with respect to sectors and restrict our attention to the four basic  cases where the edge path connecting $\mb{x}$ to its inverse traverses only one  sector.  
In each of the four cases, we can determine the homology intersection products $x\cdot a$, $x\cdot b$, and $x\cdot c$ directly using Proposition \ref{geomh} (and from these, one can also determine $x\cdot d$, $x\cdot e$, or $x\cdot f$ by linearity).  In particular for case I, these intersection numbers are $1$, $0$, and $0$ respectively.  Similarly, we can list all the intersection vectors as 
\begin{equation}\label{eq:intnums}
 \begin{pmatrix}
 x\cdot a \\ x\cdot b \\ x\cdot c 
 \end{pmatrix} =
   \begin{pmatrix}
~~1\\ ~~0\\ ~~0 
 \end{pmatrix},
   \begin{pmatrix}
-1\\ ~~1\\~~ 0
 \end{pmatrix},
   \begin{pmatrix}
 ~~0\\-1\\ ~~1
 \end{pmatrix}, \textrm{ and } 
    \begin{pmatrix}
~~0\\ ~~0 \\ -1
 \end{pmatrix}
\end{equation}
 for cases I-IV, respectively.  
  
  Combining this with Lemma \ref{lem:locality} we find that 
 \[
 \tG_m(x)
 =(x\cdot a) \tG_m(I) + (x\cdot b) (\tG_m(II)+\tG_m(I)) - (x\cdot c) \tG_m(IV) 
 \]
 gives a solution for all $\mb{x}$ regardless of the number or order of sectors traversed along the path from $\mb{x}$ to $\mb{\bar{x}}$.  Using the \Poin duality $H^*\cong H$  of $\S1g$ given by  $(a \cdot \;\;)\mapsto a$ (our sign convention matches that of \cite{Kawazumi05} but differs from that of Morita and Johnson), we obtain \eqref{eq:sumofcases}. 
\end{proof}
    
\begin{theorem}\label{thm:tautensor}
For a  Whitehead move $G\ra G'$  labeled as in Figure \ref{fig:caseI},  $\tau^{GG'}_m$ lies in  $H\otimes \El_{m+1}$ and is explicitly given by
\[
\begin{split}
3\cdot \tau^{GG'}_m&=a\otimes h(\ell(\mb{b}), \ell(\mb{c}))_{(m+1)} \\
&+b\otimes \left( h(\ell(\mb{b}), \ell(\mb{c}))-h(\ell(\mb{c}), \ell(\mb{d})) \right)_{(m+1)} \\
&+c\otimes h(\ell(\mb{a}), \ell(\mb{b}))_{(m+1)} .
\end{split}
\]
\end{theorem}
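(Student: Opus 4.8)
The plan is to reduce Theorem~\ref{thm:tautensor} to the previous lemma by computing the three sector contributions $\tG_m(I)$, $\tG_m(II)$, and $\tG_m(IV)$ explicitly and substituting them into \eqref{eq:sumofcases}. By Theorem~\ref{thm:tauisdeltaell}, $\tG_m(x)$ is the degree-$(m+1)$ part of $\ell(\mb{x}) - \tilde\ell(\mb{x})$, where both quantities are combinatorial iterated integrals along the boundary edge-paths of $G$ and $G'$ respectively. Since the summands of $\ell$ and $\tilde\ell$ agree away from the edge $\mb{e}$, the only surviving terms come from the local reconfiguration at the Whitehead move. So the key computation is purely local: for each sector, I would write down the consecutive pairs of oriented edges that the boundary edge-path traverses in $G$ versus in $G'$, and take the difference of the corresponding Hausdorff terms $-\tfrac13 h(\ell(\f_{i-1}), -\ell(\f_i))$.

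First I would carefully read off the combinatorics from Figure~\ref{fig:caseI}. In $G$ the edge $\mb{e}$ separates the sectors so that, relative to the boundary cycle, the local edge transitions are governed by the cyclic vertex data $(\mb{a},\mb{b},\mb{e})$ at one endpoint and $(\mb{c},\mb{d},\mb{\bar e})$ at the other; after the move, $\mb{e}$ is replaced by $\mb{f}$ and the sectors are reconfigured. For each of the four sectors I would identify precisely which Hausdorff summand appears in $\ell$ (the $G$-path) and which appears in $\tilde\ell$ (the $G'$-path), then extract the degree-$(m+1)$ difference. The expected outcomes, matching the stated formula, are $\tG_m(I) = -\tfrac13 h(\ell(\mb{b}),\ell(\mb{c}))_{(m+1)}$, a combination for $\tG_m(II)+\tG_m(I)$ giving $-\tfrac13(h(\ell(\mb{b}),\ell(\mb{c}))-h(\ell(\mb{c}),\ell(\mb{d})))_{(m+1)}$, and $\tG_m(IV) = \tfrac13 h(\ell(\mb{a}),\ell(\mb{b}))_{(m+1)}$. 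Substituting these into \eqref{eq:sumofcases} and clearing the factor of $3$ yields the claimed tensor.

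The step I expect to be the main obstacle is the careful bookkeeping of signs and orientations in the local sector computation. There are several places where errors accumulate: the orientation convention $\ell(\mb{\bar x}) = -\ell(\mb{x})$, the antisymmetry $h(X,Y)$ versus $h(Y,X)$, the reversal built into the integrand $h(\ell(\f_{i-1}), -\ell(\f_i))$, and the distinction between which edges genuinely lie on the $G$-path versus the $G'$-path (recalling the remark that $\ell$ is defined on all of $\pi_1$, so edges of $G'$ must be reinterpreted via their $\pi_1$-values in $G$). I would need to use the vertex compatibility relations $a+b+e=0$ and $c+d = e$ in homology, and more to the point the full logarithmic relation \eqref{eq:hvtxcompatibility}, to rewrite Hausdorff terms involving $\mb{e}$, $\mb{d}$, or $\mb{f}$ in terms of the three ``output'' variables $\mb{a}$, $\mb{b}$, $\mb{c}$ appearing in the final formula.

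Finally, once the tensor expression is obtained I would observe that it automatically lands in $H \otimes \El_{m+1}$: each coefficient $h(\ell(\cdot),\ell(\cdot))_{(m+1)}$ is by construction an element of the free Lie algebra $\El_{m+1}$, since $h$ is a Lie series in its arguments, and the leading homology classes $a,b,c \in H$ supply the first tensor factor. This membership, together with the fact that $\tau^{GG'}$ is the element of $IA(\widehat T) \cong \Hom(H,\widehat T_2)$ taking $\theta^{G'}$ to $\theta^G$, confirms that the formula represents $\tau^{GG'}_m$ as an honest element of $\Hom(H,\El_{m+1}) \cong H \otimes \El_{m+1}$ under the \Poin duality identification already fixed in the excerpt.
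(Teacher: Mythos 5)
Your proposal is correct and follows essentially the same route as the paper: the paper likewise substitutes the local sector contributions into \eqref{eq:sumofcases}, computing each one as the difference of the Hausdorff summands $-\tfrac13 h(\ell(\f_{i-1}),-\ell(\f_i))$ along the $G$- and $G'$-paths and collapsing that difference via the identity $h(X,-Y)+h(Y,Z)+h(X*(-Y),Y*Z)=h(X,Z)$ (associativity of $\exp(\El)$) together with the vertex relations $\ell(\mb{a})*\ell(\mb{b})=-\ell(\mb{e})$ and $\ell(\mb{c})*\ell(\mb{d})=\ell(\mb{e})$, arriving at exactly the sector values you predict. The only detail you leave implicit is that the collapse is achieved by this cocycle identity for the Hausdorff series rather than by direct substitution of \eqref{eq:hvtxcompatibility}, a minor point within the same argument.
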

\begin{proof}
The theorem follows from \eqref{eq:sumofcases} by finding explicit  tensor descriptions for the individual cases $\tau_m^{GG'}(I-IV)$.  By symmetry, it suffices to consider only case I.  

As mentioned above, the combinatorial integrals defining $\ell_n(\mb{x})$ and $\tilde \ell_n(\mb{x})$ over $G$ and $G'$ will be mostly the same, and the discrepancy in case I occurs with  the partial path $(\mb{d},\mb{e},\mb{\bar{a}})$ in $G$ replaced by $(\mb{d},\mb{\bar{a}})$ in $G'$.  
We wish to reinterpret this difference in terms of the Hausdorff series and start by writing
\[
\tau^{GG'}(I)=x-\frac13\left( h(\ell(\mb{d}),-\ell(\mb{e}))+h(\ell(\mb{e}),\ell(\mb{a}))-h(\ell(\mb{d}),\ell(\mb{a}))  \right).
\]

Again, we simply write  $X*Y$ for $\log(\exp X \exp Y)$ if  $X,Y \in \mathcal{L}$. Associativity of our fatgraph Magnus expansion leads to 
\[
h(X, -Y) + h(Y, Z) + h(X*(-Y), Y*Z) = h(X, Z),
\]
for any $X, Y, Z \in \mathcal{L}$, since $X*(-Y)*Y*Z = X*Z$.  In the notation of Figure \ref{fig:caseI}, we take
\[
X = \ell(\mb{d}), \; Y = \ell(\mb{e}), \textrm{ and } Z = \ell(\mb{a})
\]
so that
 \[
X*(-Y) = -\ell(\mb{c}), \textrm{ and }  Y*Z = -\ell(\mb{b}),
\]
and hence
\[
\tau^{GG'}(I) = x+ \frac13 h(X*(-Y), Y*Z) = \frac13 h(-\ell(\mb{c}), -\ell(\mb{b})).
\]
By the (signed) symmetry of our equations under the permutation $\mb{a}\mapsto \mb{b}\mapsto \mb{c}\mapsto \mb{d}\mapsto \mb{a}$, we can solve for the other cases and find
\begin{equation}\label{eq:tauI}
\begin{split}
\tau^{GG'}_m(I) &=  -\frac13 h(\ell(\mb{b}), \ell(\mb{c}))_{(m+1)}, \\
\tau^{GG'}_m(II) &= \phantom{-}\frac13 h(\ell(\mb{c}), \ell(\mb{d}))_{(m+1)},\\
\tau^{GG'}_m(III) &= -  \frac13 h(\ell(\mb{d}), \ell(\mb{a}))_{(m+1)},\\
\tau^{GG'}_m(IV) &= \phantom{-}\frac13  h(\ell(\mb{a}), \ell(\mb{b}))_{(m+1)},
\end{split}
\end{equation}
where we have used the identity $h(-Y,-X)=-h(X,Y)$.  
\end{proof}

It is important to note that though this expression is entirely canonical, it is not unique
(cf. the next section).

\subsection{Explicit formulae}\label{sec51}
\label{sect:explicitformulae}

In order to simplify Theorem \ref{thm:tautensor}, we define the \emph{integral} iterated integrals  $P_x=6\cdot \ell_2(\mb{x})$, $Q_x=36\cdot  \ell_3(\mb{x})$, and $R_x=216\cdot \ell_4(\mb{x})$ for $\mb{x}\in \mathcal{E}_{or}(G)$.  More explicitly,
\begin{gather*}
P_x=\sum_{i=1}^k [f_{i-1},f_i], \\
Q_x=\sum_{i=1}^{k} [f_{i-1},[f_{i-1},f_i]] + [f_{i},[f_{i-1},f_i]] +[f_{i-1},P_{f_i}]+[P_{f_{i-1}},f_{i}], \\
\begin{split}
R_x=\sum_{i=1}^{k} &\big(  \, 3[f_i,[f_{i-1},[f_{i-1},f_i]]]\\
&+[f_{i-1},[f_{i-1},P_{f_i}]]+[f_{i-1},[P_{f_{i-1}},f_{i}]]+[P_{f_{i-1}},[f_{i-1},f_i]]\\
&+[f_{i},[f_{i-1},P_{f_i}]]+[f_{i},[P_{f_{i-1}},f_{i}]]+[P_{f_{i}},[f_{i-1},f_i]]\\
& +[P_{f_{i-1}}P_{f_i}]+[f_{i-1},Q_{f_i}]+[Q_{f_{i-1}},f_i] \big),\\
\end{split} 
\end{gather*}
where the sums are over the edge path $\{\f_i\}$ in $G$ connecting $\mb{x}$ to its reverse if such a path exists, and $P_x=-P_{\bar x}$ (resp.\ $Q_x=-Q_{\bar x}$, $R_x=-R_{\bar x}$) 
otherwise.

In terms of the notation of  Figures \ref{fig:vertex} and \ref{fig:caseI}, one can easily verify the following relations (which essentially hold by construction), 
\begin{subequations}\label{eq:relations}
\begin{align}\label{eq:Hrelations}
a+b+e&=0,\\
\label{eq:HWrelations}
a+b+c+d&=0,\\
\label{eq:Prelations}
P_a+P_b+P_e&=-3[a,b],\\
\label{eq:PWrelations}
\!\! P_a+P_b+P_c+P_d&=-3[a,b]-3[c,d],\\
\label{eq:Qrelations}
\!\! Q_a+Q_b+Q_e&=-3\left( [P_a,b]+[a,P_b]+[a,[a,b]]-[b,[a,b]]  \right).
\end{align}
\end{subequations}

As remarked following Theorem \ref{thm:tautensor}, the above expression is not the
unique Magnus expansion satisfying the conditions of Theorem \ref{thm:fatgraphmagnus}.  For example, 
\[
\hat Q_x=\sum_{i=1}^{k} [f_{i-1},P_{f_i}]+[P_{f_{i-1}},f_{i}] 
\]
also satisfies \eqref{eq:Qrelations}, and the assignment $\ell_3(\mb{x})=\frac{1}{36} \hat Q_x$ defines another such Magnus expansion.

  Now returning to Theorem \ref{thm:tautensor} for $m=1$,  we  obtain
\begin{equation}\label{eq:t1rough}
\begin{split}
\tG_1&=\sixth \Big( a\otimes[b,c] + b\otimes  ([d,c]+[b,c] )+ c\otimes  [a,b] \Big)\\
&=\sixth (a\otimes [b,c] +b\otimes [c,a] + c\otimes [a,b] ),
\end{split}
\end{equation}
where we have simplified using \eqref{eq:HWrelations}.
Under the bracket map $H\otimes\mathcal{L}_2\ra \mathcal{L}_3$, the above expression vanishes by the Jacobi identity, and we conclude that $\tG_1\in \mathcal{H}_1$.  

For $\tau_2$, 
 \eqref{eq:tauI} and \eqref{eq:haus} allow us to determine 
\[
\begin{split}
36\, \tG_2(I)=&-([b,P_c]+[P_b,c]+[b,[b,c]]-[c,[b,c]]),\\
36\, \tG_2(II)=& \;\;\quad [c,P_d]+[P_c,d]+[c,[c,d]]-[d,[c,d]],\\
36\, \tG_2(III)=&-([d, P_a]+[P_d,a]+[d,[d,a]]-[a,[d,a]]), \textrm{ and }\\
36\, \tG_2(IV)=& \;\;\quad [a,P_b]+[P_a,b]+[a,[a,b]]-[b,[a,b]],
\end{split}
\]

\noindent and using \eqref{eq:Hrelations} through \eqref{eq:PWrelations}, one  can directly check that 
\[ 
\begin{split}
36 \tG_2(II)+36\tG_2(I)= [a,P_c]-[c,P_a]-4[a,[b,c]]-[a-2b-c,[a,c]].
\end{split}
\]
Thus by \eqref{eq:sumofcases}, we have 
\[
\begin{split}
36\, \tG_2&=a\otimes ([b,P_c]-[c,P_b]+[b-c,[b,c]])\\
&+b\otimes( [c,P_a]-[a,P_c]-4[a,[b,c]]-[a-2b-c,[a,c]])\\
&+c\otimes ([a,P_b]-[b,P_a]+[a-b,[a,b]]),\\
\end{split}
\]
which does not vanish under the bracket map in general.  Thus, the image of
$\tG_2$ does not lie in $\mathcal{H}_2$ as one might have hoped, cf. the next section.

For $\tG_3$, an implementation of \eqref{eq:haus} and \eqref{eq:relations} using Mathematica \cite{Wolfram} on an Apple Powerbook  provides the expression
\[
\begin{split}
216\tau_3(x)&=a\otimes([b, Q_c] + [b, [b, P_c]] - 2 [b, [c, P_b]] + [b, [c, P_c]] - 3 [b, [c, [b, c]]] \\
&\quad\quad- [c, Q_b] + [c, [b, P_b]] - 2 [c, [b, P_c]] + [c, [c, P_b]] + [P_b, P_c])\\
&+b\otimes (-[a, Q_c] - [a, [a, P_c]] - 6 [a, [a, [b, c]]] - 4 [a, [b, P_c]] \\
&\quad\quad + 6 [a, [b, [a, c]]] - 6 [a, [b, [b, c]]] + 2 [a, [c, P_a]] + 2 [ a, [c, P_b]] \\
&\quad\quad- [a, [c, P_c]] + 3 [a, [c, [a, c]]] + 2 [b, [a, P_c]] + 6 [b, [a, [b, c]]] + 2 [b, [c,  P_a]] \\
&\quad\quad+ [c, Q_a] - [c, [a, P_a]] + 2 [c, [a, P_b]] + 2 [c,  [a, P_c]] + 6 [c, [a, [b, c]]] \\
&\quad\quad-  4 [c, [b, P_a]] - [c, [c, P_a]] - [P_a, P_c])\\
&+c\otimes ([a, Q_b] + [a, [a, P_b]] - 2 [a, [b, P_a]] + [a, [b, P_b]] - 3 [a, [b, [a, b]]] \\
&\quad\quad- [b, Q_a] + [b, [a, P_a]] - 2 [b, [a, P_b]] + [b, [b, P_a]] + [P_a, P_b]),\\
\end{split}
\]
which again in general  does not lie in $\mathcal{H}_3$.  (This is the unique calculation in this Section 5
that is computer assisted.)

We finally note that these expressions simplify considerably if any of the four edges $a,b,c,$ or $d$ are $H$-marked zero, and furthermore, every generator of ${\mathcal K}_{g,1}={\mathcal M}_{g,1}[2]$ can be represented by a composition of such Whitehead moves, cf. Section 6.3.

For example, if $b=0$, one can check that we have 
\begin{subequations}\label{eq:tb0}
\begin{align} \label{eq:t1b0}
\tG_1&=0,\\
 \label{eq:t2b0}
  \tG_2&=\frac{1}{6^2} \left(c\otimes [a,P_b]  -a\otimes [c,P_b]  \right), \\
 \label{eq:t3b0} 
 \tG_3 &= 
\frac{1}{6^3}\big(c\otimes ([a, Q_b] +[P_a, P_b]+[a, [a, P_b]] ) \\  \notag
&\quad\quad-a\otimes([c, Q_b] + [P_c, P_b]- [c, [c, P_b]] )\big), \textrm{ and }\\
 \label{eq:t4b0} 
 \tG_4 &= 
\frac{1}{6^4}\big(c\otimes ([a,R_b] + [P_a,Q_b]+ [ Q_a,P_b]+ [a, [a, Q_b]] )\\  \notag
&\quad\quad-a\otimes([c,R_b] + [P_c,Q_b]+ [ Q_c,P_b]- [c, [c, Q_b]] ) \big).
\end{align}
\end{subequations}

\subsection{Cochain representatives of $\tau_1$ and $\tau_2$}
\label{sect:cochainreps}

Using the natural isomorphism $\mathcal{H}_2\cong\Lambda^3 H$ (see \cite{Morita89}),  \eqref{eq:t1rough} can be more elegantly written
\[
\tG_1=\frac16 a\wedge b\wedge c.
\]
Thus, $\tG_1$ defines a 1-cochain in $C^1(\mathcal{\hat G_T}, \Lambda^3 H)$ which matches the cochain ${1\over 6}j_1$ of Morita-Penner \cite{moritapenner}.  By our construction, $\tG_1=\frac16 j_1$ is  an $\M1g$-equivariant cocycle which represents  $\tau_1$ when restricted to the Torelli group.
Morita-Penner proved these properties of $j_1$ by quite different means (as was discussed in the Introduction).

We now turn towards defining a similar cochain $j_2$ representing $\tau_2$ and begin by recalling some aspects of Morita's \cite{Morita96} group cohomological  lift  $\tilde k_2$. Recall (see \cite{Morita89})  the mapping $\varpi:  \Lambda^2 H \otimes \Lambda^2 H \ra  H\otimes \mathcal{L}_3(H)$ given by the composition of the inclusion $ \Lambda^2 H \otimes \Lambda^2 H \subset H\otimes H \otimes \Lambda^2 H$ and the projection $H\otimes H \otimes \Lambda^2 H \rightarrow H\otimes (H \otimes \Lambda^2 H)/\Lambda^3 H \cong H\otimes \mathcal{L}_3(H)$, which is explicitly given by
\[
\varpi(a \wedge b \otimes c \wedge d ) = a \otimes [b,[c,d]] - b \otimes [a,[c,d]].
\]
One can check  that the  image of the symmetric square $S^2(\Lambda ^2 H)$ of $\Lambda^2 H$ under $\varpi$ lies in $\mathcal{H}_2$, and in fact,  the image of Johnson's second homomorphism $\tau_2$ can be considered (modulo torsion) as a quotient of $S^2(\Lambda^2 H)$  by \cite{Morita89}. 
  Following  \cite{Morita89}, we shall write $t_1^{\otimes 2}$ and $t_1\leftrightarrow t_2\in S^2(\Lambda^2 H)$ for $t_1\otimes t_1$ and $t_1\otimes t_2 + t_2\otimes t_1$, respectively, where $t_i\in \Lambda^2 H\cong \mathcal{L}_2$.  By abuse of notation, we shall use the same notation to denote the corresponding  images of $t_1\leftrightarrow t_2$ and $t_1^{\otimes 2}$  in $\mathcal{H}_2$ under $\varpi$.

In \cite{Morita91}, Morita (using a different sign convention for \Poin duality $H\cong H^*$) defined a 
skew-symmetric pairing  $\cdot :\Lambda^3 H \times \Lambda^3H\ra S^2(\Lambda^2H)$ by (the negative of)
\begin{equation}\label{eq:moritaproduct}
\begin{split}
-2~(a\wedge  b\wedge c)  &\cdot (d\wedge e\wedge f)  =  \\
 & (a\cdot d)  [b,c] \leftrightarrow [e,f] + (a \cdot e) [b,c] \leftrightarrow [f,d]  + (a\cdot f) [b,c] \leftrightarrow [d,e] \\
& (b\cdot d)  [c,a] \leftrightarrow [e,f] + (b \cdot e)  [c,a]  \leftrightarrow [f,d]  + (b\cdot f)  [c,a]  \leftrightarrow [d,e] \\
& (c\cdot d)  [a,b] \leftrightarrow [e,f] + (c \cdot e) [a,b] \leftrightarrow [f,d]  + (c\cdot f) [a,b] \leftrightarrow [d,e] . \\
\end{split}
\end{equation}
Writing $\chi(\xi,\eta)=-2( \xi\cdot \eta)$, for $\xi,\eta\in\Lambda^3 H$, this product defines an Euler class $\chi\in H^2(\Lambda^3 H, \mathcal{H}_2)$  of a corresponding central extension $$1\ra\mathcal{H}_2 \ra\mathcal{H}_2 \tilde\times \Lambda^3H \ra  \Lambda^3 H  \ra 1$$ of $\Lambda^3 H$ which encodes the action (modulo torsion) of $\M1g[1]$ on the third nilpotent quotient $N_3$.  In \cite{Morita96} Morita showed 
  how this leads to  the existence of a crossed homomorphism $[\tilde k_2]\in H^1(\M1g,\mathcal{H}_2 \tilde\times \Lambda^3H)$ which lifts  $\tau_2$ to all of $\M1g$.  This completes the synopsis of Morita's
results which we shall require here.

The canonical cocycle $j_1$ representing the first Johnson homomorphism was first computed
in \cite{moritapenner}, and
 the question was posed there as to whether there exists a canonical cochain $j_2\in Z^1_{\M1g}(\mathcal{\hat G},\mathcal{H}_2\tilde \times \Lambda^3H)$ in the dual  fatgraph complex which similarly represents $\tau_2$.  We shall next give an affirmative answer to this question.  

The main idea is to ``symmetrize'' our expression for $\tG_2$ so that its image lies in $\mathcal{H}_2$.  To this end, consider the map
$\,{}\bar{} :H\otimes \El_3\ra\mathcal{H}_2$ defined by 
\begin{multline*}
x\otimes [y,[z,w]] \mapsto  \frac14 [x,y]\lra [z,w] \\
=\frac14 \left(x\otimes[y,[z,w]]-y\otimes[x,[z,w]]+z\otimes [w,[x,y]]-w\otimes [z,[x,y]]\right).
\end{multline*}
  One can check that this map is well defined and is the identity on $\varpi(S^2(\Lambda^2 H))$.  Composing with this projection, we obtain $\bar \tau_2^{GG'}\in\mathcal{H}_2$ with 
\[
\bar \tau^{GG'}_2=\frac{1}{2\cdot 36}\left([a,b]\lra P_c +[b,c]\lra P_a+[c,a]\lra P_b+3[a,b]\lra[b,c]\right),
\]
which simplifies to 
\begin{equation}\label{eq:tau2bzero}
 \btG_2= \frac{1}{2\cdot 36}[c,a]\lra P_b
\end{equation}
 when $b=0$.

We now reformulate \eqref{eq:localtau2sum}.  
If $\tau_1=a\otimes [b, c] + b\otimes  [c,  a]+ c\otimes [a,b]$ and $\tau'_1=d\otimes [e,f]+ e\otimes [f,d]+ f\otimes [d,e]$, then we have 
\[
\begin{split}
&(1\otimes \tau_1 +\tau_1\otimes 1)\circ \tau'_1 =
 a\cdot d \left(  f\otimes [[b,c],e] + e\otimes [f,[b,c]] \right) \\
&+ a\cdot e \left(  d\otimes [[b,c],f] + f\otimes [d,[b,c]] \right) 
+ a\cdot f \left(  e\otimes [[b,c],d] + d\otimes [e,[b,c]] \right) \\
&+ b\cdot d \left(  f\otimes [[c,a],e] + e\otimes [f,[c,a]] \right) 
+ b\cdot e \left(  d\otimes [[c,a],f] + f\otimes [d,[c,a]] \right) \\
&+ b\cdot f \left(  e\otimes [[c,a],d] + d\otimes [e,[c,a]] \right) 
+ c\cdot d \left(  f\otimes [[a,b],e] + e\otimes [f,[a,b]] \right) \\
&+ c\cdot e \left(  d\otimes [[a,b],f] + f\otimes [d,[a,b]] \right) 
+ c\cdot f \left(  e\otimes [[a,b],d] + d\otimes [e,[a,b]] \right). 
\end{split}
\]
Under the projection $H\otimes\mathcal{L}_3\ra \mathcal{H}_2$, this composition  matches the negative of  the pairing $\Lambda ^3H \times \Lambda^3 H\ra \mathcal{H}_2$ of Morita given in \eqref{eq:moritaproduct}.  In this formulation, \eqref{eq:localtau2sum} becomes
\[
\bar \tau_2(\varphi)=\sum_{i=1}^k \bar \tau_2(W_i)+\tau_1(W_i)\cdot\sum_{j=1}^i\tau_1(W_j),
\]
which in more suggestive iterated integral notation can be written 
\[
\bar \tau_2(\varphi)=\int_\varphi \bar \tau_2+\int_\varphi \tau_1\cdot\tau_1.
\]
As a consequence, the skew property of the pairing in the above formulae dictates that   $\btG_2=-\bar \tau_2^{G'G}$ for every Whitehead move $G\ra G'$ (while this is not true for $\tG_2$).  

Defining the integral cochain $j_2\in C^1(\hat{ \mathcal{G}}_T,\mathcal{H}_2\tilde\times \Lambda^3H)$ by 
\[
j_2(W)=72~ \, \bar \tau_2(W)\tilde{\times}\tau_1(W),
\]
we may summarize this section with the following result.

\begin{theorem}
The cochain $j_2$ is a canonically defined integral cocycle $j_2\in Z^1_{\M1g}(\mathcal{\hat G}_I,\mathcal{H}_2\tilde\times \Lambda^3H)$ which represents the second Johnson homomorphism on $\M1g[2]$ and  projects onto a multiple of $j_1$ under the natural projection map.  
\end{theorem}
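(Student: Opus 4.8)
The plan is to check the four assertions in turn: that $j_2$ is canonical and integral, that it is an $\M1g$-equivariant $1$-cocycle, that it represents $\tau_2$ on $\M1g[2]$, and that it projects to a multiple of $j_1$. Canonicity is immediate, since $\tau_1(W)$ and $\bar\tau_2(W)$ are assembled from the fatgraph Magnus expansion of Theorem~\ref{thm:fatgraphmagnus}, which uses no choice of basis, together with the $Sp(2g,\bZ)$-equivariant maps $\varpi$ and the symmetrization $H\otimes\El_3\ra\mathcal H_2$. For integrality I would note that $72\,\tau_1(W)=12\,a\wedge b\wedge c$ and that, since each $P_x$ is an integral combination of brackets of $H$-markings, $72\,\bar\tau_2(W)$ lies in $\mathcal H_2$ integrally; thus the factor $72$ is exactly what clears the denominators $6$ and $2\cdot 36$ in $\tau_1$ and $\bar\tau_2$. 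Equivariance under $\M1g$ follows from the equivariance clause of Theorem~\ref{thm:fatgraphmagnus} together with the fact that $\varpi$, the symmetrization, and Morita's pairing are all defined from the symplectic homology and hence $Sp(2g,\bZ)$-equivariant; as $\tau_1$ and $\bar\tau_2$ depend only on the $H$-marking, $j_2$ descends to $\hat{\mc G}_I$.

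The heart of the argument is the cocycle condition, and here the key observation is that the group law on $\mathcal H_2\tilde\times\Lambda^3 H$, which is built from Morita's Euler class $\chi=-2(\,\cdot\,)$, is precisely what turns the reformulated identity $\bar\tau_2(\varphi)=\int_\varphi\bar\tau_2+\int_\varphi\tau_1\cdot\tau_1$ and $\tau_1(\varphi)=\int_\varphi\tau_1$ into a multiplicative statement. Concretely, for a sequence $G=G_0\xra{W_1}\cdots\xra{W_k}G_k$ with composite $\varphi\in\mc K_{g,1}$, I would show that the ordered product $\prod_{i=1}^k j_2(W_i)$ taken in $\mathcal H_2\tilde\times\Lambda^3 H$ equals $72\bigl(\bar\tau_2(\varphi)\tilde\times\tau_1(\varphi)\bigr)$. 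The quadratic correction $\sum_{i<j}\chi\bigl(72\,\tau_1(W_i),72\,\tau_1(W_j)\bigr)$ produced by the central extension matches the term $\int_\varphi\tau_1\cdot\tau_1$ from \eqref{eq:localtau2sum} exactly because $\chi=-2(\,\cdot\,)$ and because the skew-symmetry $\xi\cdot\xi=0$ kills the diagonal; this is the bookkeeping for which the normalization $72$ was engineered.

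Granting this composition formula, the three defining relations fall out. Each $2$-cell of $\hat{\mc G}_I$ is bounded by a closed loop of Whitehead moves (and their reverses) returning to the same marked fatgraph, hence representing $\varphi=\id\in\mc K_{g,1}$; traversal of a reverse move contributes $j_2(\overline W)=j_2(W)^{-1}$, so the product is well defined. By the telescoping behind \eqref{eq:localtau2sum}—equivalently, by Kawazumi's theorem that the total Johnson invariant is independent of the path—we have $\tau_1(\id)=0$ and $\bar\tau_2(\id)=0$, whence $\prod_i j_2(W_i)=72(0\tilde\times 0)$ is the neutral element. This gives the commutativity and pentagon relations at once, while the involutivity relation $j_2(\overline W)=j_2(W)^{-1}$ follows separately from the skew identity $\btG_2=-\bar\tau_2^{G'G}$ together with $\tau_1(\overline W)=-\tau_1(W)$ and $\xi\cdot\xi=0$, which makes $(-h,-\lambda)$ the inverse of $(h,\lambda)$. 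I expect the main obstacle to be precisely the computation of the previous paragraph: verifying that the central-extension product on the $72$-scaled integral classes reproduces the integral form of \eqref{eq:localtau2sum}, where the coefficient $-2$ in $\chi$, the factor $72$, and the denominators of $\tau_1$ and $\bar\tau_2$ must all be reconciled.

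Finally, the last two assertions drop out of the composition formula. Restricting to $\varphi\in\M1g[2]=\mc K_{g,1}$ gives $\tau_1(\varphi)=0$, so $j_2(\varphi)=\bigl(72\,\bar\tau_2(\varphi),0\bigr)$; since on $\mc K_{g,1}$ the image of $\tau_2$ lies in $\mathcal H_2$ and the symmetrization is the identity on $\varpi(S^2(\Lambda^2 H))$, we have $\bar\tau_2(\varphi)=\tau_2(\varphi)$, so $j_2$ represents the multiple $72\,\tau_2$ there. Applying the projection $\mathcal H_2\tilde\times\Lambda^3 H\ra\Lambda^3 H$ to the defining formula sends $j_2(W)$ to $72\,\tau_1(W)=12\,a\wedge b\wedge c=12\,j_1(W)$, so $j_2$ projects onto $12\,j_1$, completing the proof.
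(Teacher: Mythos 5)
Your route is the same as the paper's: the paper's own proof is two sentences that rest entirely on the preceding construction, namely the reformulation of \eqref{eq:localtau2sum} through Morita's pairing (so that composition of Whitehead moves becomes the group law of the central extension), the skew property $\btG_2=-\bar\tau_2^{G'G}$ for involutivity, path-independence of the canonical fatgraph Magnus expansion for triviality around $2$-cells, and the identity $\bar\tau_2(\varphi)=\tau_2(\varphi)$ on $\M1g[2]$. You reconstruct exactly these ingredients, and your treatment of involutivity, of $2$-cell boundaries as loops representing $\id$, of the restriction to $\M1g[2]$, and of the projection to $\Lambda^3 H$ is sound.

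However, the central quantitative claim is false as you state it, and it is precisely the point you flag at the end as "to be reconciled." You assert that $\sum_{i<j}\chi\bigl(72\,\tau_1(W_i),72\,\tau_1(W_j)\bigr)$ reproduces the correction term $72\sum_i\tau_1(W_i)\cdot\sum_{j\le i}\tau_1(W_j)$ "exactly because $\chi=-2(\,\cdot\,)$." It does not: scaling both components of $j_2$ by $72$ scales the linear terms of the composition law by $72$ but scales the quadratic central-extension correction by $72^2$, so with $\chi=-2(\,\cdot\,)$ the two sides differ by the factor $2\cdot72=144$. Concretely, the composition law for two moves reads
\begin{equation*}
\bar\tau_2(W_1W_2)=\bar\tau_2(W_1)+\bar\tau_2(W_2)+\tau_1(W_2)\cdot\tau_1(W_1),
\end{equation*}
so a cochain of the form $\bigl(s\,\bar\tau_2(W)\bigr)\tilde\times\bigl(t\,\tau_1(W)\bigr)$ is multiplicative for the group law built from $\chi$ if and only if $s=2t^2$. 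The scaling $(s,t)=(72,72)$ violates this, and around a pentagon cell the product of such values is genuinely not the neutral element, so the cocycle property fails; your proof breaks at exactly this step. The repair is routine: take $(s,t)=(72,6)$, i.e.\ $j_2(W)=72\,\bar\tau_2(W)\tilde\times 6\,\tau_1(W)$, which satisfies $s=2t^2$, keeps both components integral (since $6\,\tau_1(W)=a\wedge b\wedge c=j_1(W)$ and $72\,\bar\tau_2(W)=[a,b]\lra P_c+[b,c]\lra P_a+[c,a]\lra P_b+3[a,b]\lra[b,c]$), and makes the projection exactly $j_1$; alternatively keep $(72,72)$ and rescale the pairing defining the group structure to $\frac{1}{144}\chi$. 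In fairness, the paper itself never checks this compatibility of normalizations, but since the cocycle condition \emph{is} the multiplicativity statement, one of these corrections must be inserted for your argument (and the theorem as literally stated) to go through.
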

\begin{proof}
Since the image of Johnson's second homomorphism $\tau_2$ lies in $\mathcal{H}_2$, we conclude
$\tau_2(\varphi)=\bar\tau_2(\varphi)$, for $\varphi\in \M1g [2]$.  The theorem follows from this observation and the definition of $j_2$.
\end{proof}

\subsection{Cochain representatives of  $\tau_m$}
\label{sect:morecochainreps}

The composition  of elements of $IA(\widehat T)$ leads to a natural product structure on $ Hom(H,\widehat T_2)$ (thus also on $H\otimes \mathcal{L}$)  such that  for  Whitehead moves $G\ra G'$ and $G'\ra G''$,   we have $ \tau^{GG''}=\tau^{GG'}+ \tau^{G'G''}+\tau^{GG'}\circ \tau^{G'G''}$ with, for example,
\begin{equation}\label{eq:compositions}
\begin{split}
(\tau^{GG'}\circ \tau^{G'G''})_{(4)}&=  (\tau_1^{GG'}\otimes 1+1\otimes \tau_1^{GG'}  )\circ  \tau_1^{G'G''},\\
(\tau^{GG'}\circ \tau^{G'G''})_{(5)}&=(\tG_2\otimes 1+ 1\otimes \tG_2 + \tau_1^{GG'} \otimes \tau_1^{GG'} )\circ  \tau_1^{G'G''}\\ 
&+(\tau_1^{GG'}\otimes1 \otimes 1+1\otimes \tau_1^{GG'}\otimes 1 + 1\otimes 1\otimes \tau_1^{GG'}  )\circ  \tau_2^{G'G''},
\end{split}
\end{equation}
where we consider $H\otimes\mathcal{L}_k$ to be of degree $k+1$.
Each summand in this expression in degree four arises by ``mixing'' $\tau_1$ with itself, and terms in this degree-five expression arise by likewise mixing $\tau _1$ with itself and by mixing $\tau _1$ with $\tau _2$.  The expression for $(\tau^{GG'}\circ \tau^{G'G''})_{(m)}$ in general degree $m$ arises by similarly mixing lower degree groupoid Johnson homomorphisms.

In this way, our fatgraph Johnson maps $\tau^{GG'}$ can be considered (in a loose sense) as 1-cocycles on $\mathcal{\hat G}_I$ with non-abelian coefficients $H\otimes \mathcal{L}$.  This terminology is warranted  in that their values are invariant under (based) homotopy of paths (or loops) in $\mathcal{\hat G}_I$.  However, they are restricted by the feature that their values do depend on the initial basepoints of such paths (or loops).  

Refining the above,  let  $\mathcal{L}_{\leq k}=\coprod_{i=1}^k \mathcal{L}_i$ denote the truncated free Lie algebra on $H$ and give $H\otimes \mathcal{L}_{\leq k}$ the natural product descending   from that of $H\otimes \mathcal{L}$ by discarding terms of degree higher that $k$.   Similarly, we define $\tG_{\leq m}=\sum_{i=1}^m \tG_i$.  In this way, the $\tG_{\leq m}$ are (basepoint dependent) 
 1-cocycles with coefficients in  $H\otimes \mathcal{L}_{\leq m+1}$ that represent $\tau_m$ (independently  of basepoint) when restricted to $\M1g[m]$. 

 Thus in some sense, every $\tau_m$ can be lifted to a 1-cochain in $\mathcal{\hat G}_I$.  
   However, these lifts $\tau _m$, for $m>2$, are  less satisfactory than those of $\tau_1,\tau_2$ in the previous section since the general $\tau_m$ has large cokernel (with our current limited understanding in terms of symplectic representation theory), and there is no known
interpretation as an iterated integral.  
On the other hand, it is always possible to ``symmetrize'' the coefficients of $\tG_m$   as for $m=1,2$
to have image in $\mathcal{H}_m$, but the resulting expressions appear to be  quite complicated for $m>2$.  
Furthermore, the calculation of the general Johnson homomorphism is computationally intensive (much as are the crossed homomorphisms of the classical Magnus representations, cf. \cite{Suzuki02}).  It is worth emphasizing, however,  that with sufficient computer power, our results give a practical method for calculating $\tau_m(\varphi)$ for any reasonably small $m$ and any $\varphi$ which is conveniently represented as a sequence of Whitehead moves.

\section{Examples for twists on separating curves }
\label{sect:examples}

In this final section, we illustrate our techniques by calculating the 1-cochain Johnson homomorphisms $\tau_1$, $\tau_2$, and the fatgraph Magnus lift of $\tau_3$ for certain elements of the subgroup $\mathcal{K}_{g,1}\cong\M1g[2]$ generated by (necessarily infinitely many, see \cite{BissFarb}) Dehn twists on separating curves.  In fact, we shall evaluate these Johnson maps on certain sequences of Whitehead moves which serve as a natural extension
(in the sense of groupoids)  of the set of separating twists.   

We begin by considering  the right-handed Dehn twist $T_\partial$ on the boundary $\partial\S1g$ of $\S1g$, which  has the nice feature  that it is easily described in terms of Whitehead moves: it is represented by a sequence of repeated Whitehead moves on the edge ``to the right'' of the tail $\mb{t}$.  
This boundary Dehn twist is furthermore the basic case of Dehn twists on other separating curves in higher genus surfaces by the ``stability property'' of bordered mapping class groupoids, which we next formulate:

Suppose that $e$ is a separating edge of the $\pi$-marked trivalent fatgraph $G$, i.e., removing $e$ from $G$ produces a disconnected graph.
Consider the component $G'$ of $G-\{ e\}$ not containing the tail, so $G'$ inherits a $\pi$-marking by restriction, and hence a geometric $H$-marking, from that on $G$.  We may take $e$ as the tail of $G'$ and find that
by construction for any edge $x$ of $G'$, we have $\theta^ 
{G'}(x)=\theta^G(x)$.

For any sequence of Whitehead moves on edges  
of $G'$, we have an algorithm for computing $\tau_m$, for any $m\geq 1$, and
this clearly takes the same value as if we regarded the Whitehead moves
as occurring on $G$ itself.   Let $S(G)$ denote the once-bordered surface corresponding
to $G$ and likewise $S(G')$ for $G'$, so there is a natural inclusion $i:S(G)\to S(G')$
(in the usual sense of stability \cite{Harer85} of mapping class groups).  

The {\it groupoid stability
property} of bordered surfaces is that if a sequence of Whitehead moves on edges of $G'$ represents an element
$\varphi$ in the mapping class group of $S(G')$, then this same sequence of Whitehead
moves along edges of $G'$ in the fatgraph $G$ represents $i_*(\varphi )$ in the mapping class group of $S(G)$.

Although our fatgraph Magnus expansion is canonical, for calculational purposes, it will be convenient to consider one particular ``symplectic'' fatgraph $G_0$ as the basepoint in $\mathcal{\hat G}_I$.  We have depicted our chosen basepoint fatgraph with symplectic $H$-basis $\{u_i,v_i\}$ corresponding to a standard generating set $\{\mb{u}_i,\mb{v}_i\}$ of $\pi_1$ in Figure \ref{fig:sympg}.  For this fatgraph, the twist $T_\partial$ is represented by moving the tail $\mb{t}$ around the fatgraph  to the right until until it returns to its original position.  Similarly, we could for instance perform a genus two separating twist by using the groupoid stability property and moving the edge labeled $\mb{t'}$ around the genus two subgraph of which it is the tail.

\begin{figure}[!h]
\begin{center}
\epsffile{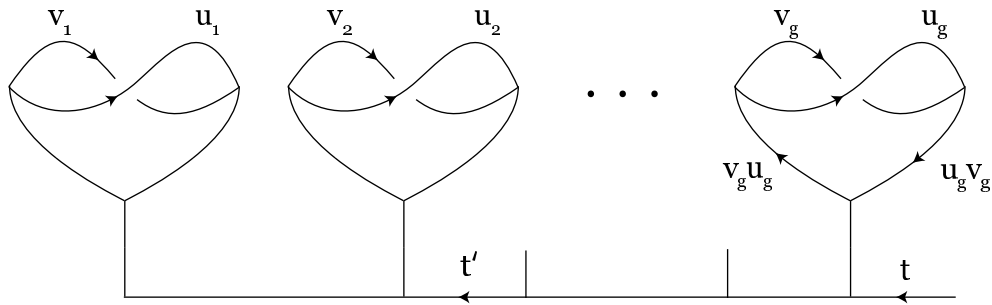}
\caption{The symplectic genus $g$  fatgraph $G_0$.}
\label{fig:sympg}
\end{center}
\end{figure}

Using Mathematica \cite{Wolfram} on an Apple Powerbook, we have calculated the fatgraph Magnus expansion  of $G_0$ and have the following modulo $\widehat T_5$:
\[
\begin{split}
\ell^{G_0}(\mb{u}_i)&= u_i + \frac12 [u_i, v_i ] - \frac19[u_i, [u_i, v_i ]] -\frac{1}{18} [v_i , [u_i, v_i ]] \\
& + \frac{1}{72} ([u_i, [u_i, [u_i, v_i ]]] + [u_i, [v_i , [u_i, v_i ]]]  - [v_i , [v_i , [u_i, v_i ]]]),\\
\ell^{G_0}(\mb{v}_i)&= v_i - \frac12 [u_i, v_i ] + \frac19[v_i , [u_i, v_i ]] +\frac{1}{18} [u_i, [u_i, v_i ]] \\
&+  \frac{1}{72} ([u_i, [u_i, [u_i, v_i ]]] - [u_i, [v_i , [u_i, v_i ]]] - [v_i , [v_i , [u_i, v_i ]]]), \textrm{ and } \\
\ell^{G_0}(\mb{t}) &= -\omega+ \frac{1}{36} \sum_{i=1}^g \left( [u_i, [u_i, [u_i, v_i ]]] + [u_i, [v_i ,[u_i, v_i ]]] + [v_i , [v_i , [u_i, v_i ]]]\right),\\
\end{split}
\]
 where $\omega= \sum_{i=1}^g [u_i,v_i]$ is the ``symplectic class''.

As a consequence, we have the following
\begin{proposition}\label{prop:tail}
For any trivalent marked bordered fatgraph $G$, 
\[
\theta^G(\mb{t}) = 1 + 0- \omega + 0 + \theta^G_4(\mb{t}) + \dotsm.
\]
\end{proposition}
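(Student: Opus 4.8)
The plan is to exploit the geometricity condition $\pi(\mb{\bar t})=\partial$, which gives $\theta^G(\mb t)=\theta^G(\partial)^{-1}$ for every $G$ via the orientation relation \eqref{eq:orientation}, and then to treat the graded pieces separately. The degree $0$ and degree $1$ terms are immediate: any generalized Magnus expansion has $\theta^G(\mb t)\in 1+\widehat T_1$ with linear term $[\mb t]=[\partial^{-1}]=0$, since $\partial=\prod_{i=1}^g[\mb u_i,\mb v_i]$ is a product of commutators and hence trivial in $H$. For the quadratic term I would use the elementary fact that for \emph{any} Magnus expansion the group commutator $\theta^G([\mb a,\mb b])$ has quadratic part $[a,b]=a\otimes b-b\otimes a$, independently of the expansion; multiplying over $i$ (each factor having vanishing linear part) gives $\theta^G(\partial)=1+\om+(\deg\ge 3)$ with $\om=\sum_i[u_i,v_i]$, whence $\theta^G(\partial)^{-1}=1-\om+(\deg\ge 3)$ and $\theta^G_2(\mb t)=-\om$. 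All of this is uniform in $G$.

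The only real content is the vanishing of the degree $3$ term, which is genuinely special to the fatgraph expansion (for the exponential group-like expansion $\theta(\mb u_i)=\exp(u_i),\ \theta(\mb v_i)=\exp(v_i)$ one finds $\ell_3(\partial)=\half\sum_i([u_i,[u_i,v_i]]+[v_i,[u_i,v_i]])\neq 0$, so no formal manipulation of $\partial$ alone can give the result). My strategy is to show that $G\mapsto\theta^G_3(\mb t)$ is constant on the connected complex $\mc G_T$ and to read off its value from the explicit expansion of the symplectic graph $G_0$ displayed above, where $\ell^{G_0}_3(\mb t)=0$ and hence $\theta^{G_0}_3(\mb t)=0$. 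For the constancy I would use the relation $\theta^G(\mb t)=\tau^{GG'}\bigl(\theta^{G'}(\mb t)\bigr)$ arising from the definition $\tau^{GG'}=\theta^G\circ(\theta^{G'})^{-1}$, valid for every Whitehead move $G\to G'$. Since $\tau^{GG'}\in IA(\widehat T)$ acts as the identity in lowest degree, extracting the degree $3$ part yields
\[
\theta^G_3(\mb t)-\theta^{G'}_3(\mb t)=\bigl[\tau^{GG'}(-\om)\bigr]_3,
\]
so everything reduces to the key claim $\bigl[\tau^{GG'}(\om)\bigr]_3=0$.

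To prove this claim I would write $T\in H\otimes\El_2\subset H^{\otimes 3}$ for the tensor form of $\tau^{GG'}_1$, so that $\tau^{GG'}_1(x)=\sum_\alpha(a_\alpha\cdot x)\,\xi_\alpha$ for $T=\sum_\alpha a_\alpha\otimes\xi_\alpha$, and compute the cubic part of $\tau^{GG'}(\om)=\sum_i\bigl(\tau^{GG'}(u_i)\tau^{GG'}(v_i)-\tau^{GG'}(v_i)\tau^{GG'}(u_i)\bigr)$. Using the symplectic duality identity $\sum_i\bigl[(a\cdot v_i)u_i-(a\cdot u_i)v_i\bigr]=a$, the leading-slot contributions $\sum_i\bigl(u_i\otimes\tau^{GG'}_1(v_i)-v_i\otimes\tau^{GG'}_1(u_i)\bigr)$ collapse to $T$, while the trailing-slot contributions $\sum_i\bigl(\tau^{GG'}_1(u_i)\otimes v_i-\tau^{GG'}_1(v_i)\otimes u_i\bigr)$ collapse to $-\sigma(T)$, where $\sigma(x\otimes y\otimes z)=y\otimes z\otimes x$ cyclically permutes the three tensor slots; that is,
\[
\bigl[\tau^{GG'}(\om)\bigr]_3=T-\sigma(T).
\]
Now $\tau^{GG'}_1\in\mc H_1$ by the computation following \eqref{eq:t1rough}, and under the identification $\mc H_1\cong\Lambda^3H$ the element $T=\sixth\,a\wedge b\wedge c$ is a totally antisymmetric tensor; since the cyclic permutation $\sigma$ is an even permutation of three letters it fixes $T$, so $T-\sigma(T)=0$ and the claim follows.

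Finally I would assemble the pieces: $\theta^G_3(\mb t)$ is unchanged across every Whitehead move, $\mc G_T$ is connected, and the value at $G_0$ is $0$, so $\theta^G_3(\mb t)=0$ for all trivalent marked $G$. (Alternatively, one propagates the normalization by the $\M1g$-equivariance of Theorem~\ref{thm:fatgraphmagnus}, using $\varphi(\mb t)=\mb t$ together with the $Sp(2g,\bZ)$-invariance of $\om$ and of the grading.) The main obstacle is precisely the degree $3$ identity $[\tau^{GG'}(\om)]_3=T-\sigma(T)$: this is where the geometric input enters, both through the symplectic duality contraction and through recognizing that the Whitehead-move cocycle $\tau^{GG'}_1$ lands in the totally antisymmetric $\Lambda^3H$, which is exactly the cyclic symmetry that forces cancellation.
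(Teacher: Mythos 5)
Your proof is correct, and its skeleton coincides with the paper's: evaluate at the symplectic fatgraph $G_0$ using the displayed computation of $\ell^{G_0}(\mb{t})$, then show the relevant graded pieces of $\theta^G(\mb{t})$ are constant along Whitehead moves, the crux being the vanishing $(\tau^{GG'}_1\otimes 1+1\otimes\tau^{GG'}_1)(\om)=0$, which holds because $\tau^{GG'}_1$ lies in $\mathcal{H}_1\cong\Lambda^3H$. You differ in two worthwhile ways. First, for the degree $2$ term the paper propagates the value from $G_0$ via $\tau^{GG'}_1(t)=0$ and \eqref{eq:tau1difference}, whereas you note that the quadratic part of $\theta([\mb{a},\mb{b}])$ equals $[a,b]$ for \emph{every} generalized Magnus expansion, so $\theta^G_2(\mb{t})=-\om$ follows uniformly from $\pi(\mb{\bar t})=\partial=\prod_i[\mb{u}_i,\mb{v}_i]$ with no reference to $G_0$ or to Whitehead moves; this is more general and isolates exactly where the fatgraph structure is needed (only in degree $3$, as your exponential-expansion counterexample shows). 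Second, where the paper simply cites Lemma 4.5 of \cite{Morita93} for the key vanishing, you prove it directly: contracting $\om$ against the tensor $T=\tau^{GG'}_1$ via symplectic duality gives $[\tau^{GG'}(\om)]_3=T-\sigma(T)$ with $\sigma$ the cyclic slot permutation, and total antisymmetry of $T=\frac16\,a\wedge b\wedge c$ together with the evenness of $\sigma$ forces $T-\sigma(T)=0$; your computation is a correct, self-contained replacement for that citation. One caveat: your parenthetical alternative of propagating the normalization by $\M1g$-equivariance alone would not suffice, since the $\M1g$-orbit of $G_0$ reaches only re-markings of the same underlying fatgraph and not other combinatorial types; but your primary argument (Whitehead-move constancy plus connectivity of the dual fatgraph complex) does not rely on it, so this does not affect the proof.
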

\begin{proof}
The result holds for the symplectic fatgraph $G_0$ of Figure \ref{fig:sympg} by our calculation above.  Since $\tG_1(t)=0$ for any Whitehead move $G\ra G'$, \eqref{eq:tau1difference} implies that $\theta^G_2(\mb{t})=\theta^{G'}_2(\mb{t})$, thus, $\theta^{G}_2(\mb{t})=\omega$ for all fatgraphs $G$.  Similarly, $\tG_2(t)=0$,  and  $(\tG_1 \otimes 1 + 1 \otimes \tG_1) (\omega )=0$ since $\tG_1\in \mathcal{H}_1$ (see Lemma 4.5 of \cite{Morita93}), so that \eqref{eq:tau2difference} implies $\theta^G_3(\mb{t})$ is also unchanged by any Whitehead move and is thus always zero.
\end{proof}
Note that for $m>3$,  $\theta^G_m(\mb{t})$ does depend sensitively on the combinatorics of $G$.

\subsection{$\tau_1$ and $\tau_2$ on $\mathcal{K}_{g,1}$}\label{sec61}

Since the tail $\mb{t}$ has zero $H$-marking and every Whitehead move in the composition representing the twist $T_\partial$ on $\partial \S1g$ is along an edge adjacent to $\mb{t}$, we can compute the contributions to $\tau_1$, $\tau_2$ and $\tau_3$ of each move using \eqref{eq:tb0} and \eqref{eq:tau2bzero}.  For instance, \eqref{eq:t1b0} immediately implies that the contributions to $\tau_1$ all vanish, so $\tau_1(T_\partial)=0$ as expected.  

To compute $\tau_2$ for the twist $T_\partial$, we first note that since $\tau_1(W)=0$ for each Whitehead move $W$ in the composition representing $T_\partial$, \eqref{eq:localtau2sum} simplifies to give $\tau_2(T_\partial)=\sum_W\tau_2(W)$, where the sum here and below is over all the Whitehead moves in the composition representing $T_\partial$.  Moreover, since $P_t=-6\,\omega$, \eqref{eq:tau2bzero} shows that each Whitehead move will give a contribution of the form
\[
 \bar \tau_2(W) = \frac{1}{2\cdot 6}\left([a,c]\lra \omega\right).
\]
One can check that the sequence of (nontrivially contributing) values of $a$ and $c$ in the above formula produced by the Whitehead moves representing $T_\partial$ are given by $(a,c)=$
\begin{equation}\label{eq:accases}
\begin{array} {c c c}
(-u_i-v_i, u_i), & (-u_i,-v_i), & (v_i,-u_i-v_i), \\
(u_i+v_i, -u_i), & (u_i,v_i), & (-v_i,u_i+v_i), \\
\end{array}
\end{equation}
for $i=1,\ldots, g$.  Note that these values can be read off from the sectors at each vertex of $G$, and we have not included the pair $(u_i+v_i,-u_i-v_i)$ since its contribution is trivial.   In each of these cases, we have $[a,c]=[u_i,v_i]$ for some $i$.  Since there are six  cases for each $i$, the final sum gives
\begin{equation}\label{eq:tau2formula}
\begin{split}
 \tau_2(T_\partial)= \bar\tau_2(T_\partial)&=\sum_{i=1}^g  \frac 12 [u_i,v_i]\lra \omega= \omega^{\otimes 2}\\
&= \sum_{i=1}^g  u_i\otimes[v_i,\omega]-v_i\otimes[u_i,\omega]
 \end{split}
\end{equation}
which differs (because of our differing sign conventions) only in sign from  the result in \cite{Morita89}.

It is important to note that the above expression is independent of the choice of symplectic basis for the surface $\S1g$.  Indeed, this reflects the fact that the Johnson homomorphisms are $\M1g$-equivariant in the sense that $\tau_m(\varphi\psi\varphi^{-1})=|\varphi|(\tau_m(\psi))$, for $\varphi\in\M1g$ and $\psi\in\M1g[m]$.  In fact, the above expression is sufficient to calculate $\tau_2$ for any element of $\mathcal{K}_{g,1}$ which can explicitly be written as a product of separating twists.

\subsection{$\tau_3$ on $\mathcal{K}_{g,1}$}\label{sec62}
Again relying on  the fact that every Whitehead move $W$ in the composition representing the twist $T_\partial$ on $\partial \S1g$ has vanishing $\tau_1(W)$, \eqref{eq:compositions} shows that  $\tau_3(T_\partial)$ is simply equal to the sum  of $\sum _W \tau_3(W)$ as before.  
By Proposition \ref{prop:tail} and \eqref{eq:t3b0}, each such contribution will be of the form 
\begin{equation}\label{eq:t3septwist}
 \tau_3(W) = 
\frac{\!-1}{36}\big(c\otimes ([P_a, \omega]+[a, [a, \omega]] )
-a\otimes( [P_c,\omega]-[c, [c, \omega]] )\big).
\end{equation}
One can check that the contributions of the $c\otimes [a, [a, \omega]] $ terms given by the sequence \eqref{eq:accases} cancel, as do  the  $a\otimes [c, [c, \omega]]$ terms. Also, the two contributions of the pair $(a,c)=(u_i+v_i,-u_i-v_i)$ not listed in \eqref{eq:accases} cancel.  

To compute the contributions of the remaining terms, it is necessary to know the values of $P_x$ for certain oriented edges $\mb{x}$ of each fatgraph arising in the sequence of  Whitehead moves in the composition representing $T_\partial$.  
Fortunately, these values are easy to derive.  

Consider first the fatgraph $G_0$.  We have already observed  above that the bracket of edges at every sector listed in \eqref{eq:accases} takes the value   $-[u_i,v_i]$, for some $i$ (with the correct orientation according to our combinatorial integrals).  Thus, the value of $P_x$ for an oriented edge $\mb{x}$ is determined solely by the number of sectors (for each $i$) which are traversed in the path from $\mb{x}$ to $\mb{\bar x}$ (or its reverse).  It follows that every edge $\mb{x}$ with nonzero $H$-marking has $P_x=\pm 3[u_i,v_i]$, for some $i$, while those edges with $x=0$ have  either $P_x=\pm 6[u_i,v_i]$, for some $i$, or   $P_x=\pm \sum_{i=1}^h 6[u_i,v_i]$, for some $h$.  Moreover, the signs are all negative if the path from $\mb{x}$ to $\mb{\bar x}$ avoids the tail.  

Consider an arbitrary fatgraph arising in the sequence of Whitehead moves representing $T_\partial$.  Such a fatgraph resembles $G_0$ but with the tail attached at a different location.  In this case, it is possible that the path connecting $\mb{x}$ to $\mb{\bar x}$ is forced to go ``the long way'' around the boundary cycle.  The net result is that we get $P_t=-6\omega$ minus the ``expected'' value of $P_x$.  Since the value of $P_x$ only enters \eqref{eq:t3septwist} through the terms $[P_a, \omega]$ and $[P_c, \omega]$, we only require the value of $P_x$ modulo $\omega$, and the consequence of going ``the long way'' reduces to a change of sign.   

With this recipe, one can check that the during the sequence of Whitehead moves in the composition representing the twist $T_\partial$, the possible values  for the $a$ and $P_c$  in \eqref{eq:t3septwist} are given (modulo $\omega$) by $(a,P_c)=$
\[
\begin{array} {c c c}
(-u_i-v_i, 3[u_i,v_i]), & (-u_i,3[u_i,v_i]), & (v_i,-3[u_i,v_i]), \\
(u_i+v_i,3[u_i,v_i]), & (u_i+v_i, -3[u_i,v_i]), & (u_i,-3[u_i,v_i]), \\
 (-v_i,-3[u_i,v_i]), & \text{ and }& (-u_i-v_i,-6[u_i,v_i]),
\end{array}
\]
for $i=1,\ldots, g$.  Summing the resulting  values of $a\otimes [P_c,\omega]$ for these pairs gives $(v_i-u_i)\otimes [3[u_i,v_i],\omega]$.  Similarly, the sum of values of $c\otimes [P_a,\omega]$  gives the same  $(v_i-u_i)\otimes [3[u_i,v_i],\omega]$, and the total contribution of  \eqref{eq:t3septwist} over the twist $T_\partial$ is thus zero:
\begin{equation}\label{eq:tau3formula}
\tau_3(T_\partial)=0.
\end{equation}

We now wish to derive a method for calculating $\tau_3$ on twists on separating curves other than $\partial\S1g$.   Let $\eta$ be such a curve which separates a subsurface of genus $h$, and let $\varphi_\eta$ be an element of $\M1g$ which takes  $\eta$ to the standard  genus $h$ separating curve $\partial_h$ on our surface $\S1g$  (with respect to its symplectic generators  $\{\mb{u}_i,\mb{v}_i\}$).   Next, let $\gamma$ be a sequence of Whitehead moves which represents $\varphi_\eta$ and let  $\tau_{\leq 3}(\gamma)$ denote the value of $\tau_{\leq 3}$ on this sequence.   We can then use \eqref{eq:compositions} to compute the value of $\tau_3$ on the twist $T_\eta$ by composing $\tau_{\leq 3}(\gamma)$, $ \tau_{\leq 3}(T_\eta)$, and $ \tau_{\leq 3}(\gamma^{-1})$.

Before we discuss how to perform this calculation, we mention that it is not necessarily clear if there is a concise algorithm for translating a given element of $\M1g$ into a sequence of Whitehead moves (however, the converse is straightforward by the evolution of $\pi$-markings).  Because of this, we then consider the following generating set of $\mathcal{K}_{g,1}$.  

Let $\Gamma$ denote the set of paths $\gamma$ in $\mathcal {\hat G}_I$ beginning at $G_0$ and ending at a combinatorially isomorphic  fatgraph (with a possibly different $H$-marking).  Consider the set  $\{(\gamma, h) \; |\;  \gamma\in \Gamma,  h\in \{1, \ldots , g\}\}$ of loops in $\mathcal{\hat G}_I$ which are defined by traversing $\gamma$, performing a (standard) genus $h$ twist, then traversing $\gamma^{-1}$.  Since this set  obviously contains the set  of twists on separating curves of $\S1g$ (in fact, it is essentially the set of such twists with  redundancies), it serves as a generating set for $\mathcal{K}_{g,1}$ which is more natural in our groupoid setting.  

Now we return to compute the value of $\tau_3$ on an element $(\gamma, h)$.  First, note that (while in general $\tau_2(\gamma)\neq \tau_2(\gamma^{-1})$), if we simply traversed $\gamma$ followed by  $\gamma^{-1}$, we would obviously have $\tau_1=\tau_2=\tau_3=0$.  Moreover, since $\tau_1(T_h)=0$,  \eqref{eq:compositions} shows that the $\tau_2(\gamma)$ term will not contribute to the composition, and can thus be disregarded. Since $\tau_3(T_h)=0$, the only nontrivial contributions  come from the mixing of $\tau_1(\gamma)$ and $\tau_2(T_h)$ (cf. Proposition 3.4 of \cite{Morita91}):
\begin{equation}\label{eq:tau3recipe}
\begin{split}
\tau_3(\gamma,h)&=   [\tau_1(\gamma),{}^\gamma\tau_2(T_h)]\\
&=(1\otimes 1 \otimes \tau_1(\gamma)+ 1 \otimes \tau_1(\gamma)\otimes 1 + \tau_1(\gamma) \otimes  1\otimes  1)\circ  (^\gamma\tau_2(T_h)) \\
&- (1\otimes {}^\gamma \tau_2(T_h)+ {}^\gamma \tau_2(T_h)\otimes 1)\circ \tau_1(\gamma),
\end{split}
\end{equation}
where $^\gamma\tau_2$ denotes $|\varphi_\gamma| (\tau_2)$ for $\varphi_\gamma\in\M1g$ represented by $\gamma$.  Note that  $^\gamma\tau_2(T_h)$ is given by \eqref{eq:tau2formula} and the action $|\varphi_\gamma| $ of $\gamma$ on $H$.  

The computational thrust of this approach is the determination of $\tau_1(\gamma)$, which
can be done either by hand or on the computer.  Moreover,  since $\tau_1(\varphi)=0$, for every $\varphi\in \mathcal{K}_{g,1}$ (again by \eqref{eq:compositions}), we see that $\tau_3$ is actually a (non-crossed) \emph{homomorphism} on $\mathcal{K}_{g,1}$; of course,  it follows that the values of $\tau_3$ on generators determine it uniquely.  Using the formulae, we have proved:

\begin{corollary}
The third Johnson homomorphism $\tau _3$ is a homomorphism on ${\mathcal K}_{g,1}$.
\end{corollary}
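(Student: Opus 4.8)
The plan is to deduce additivity directly from the group law for the total Johnson map, exploiting the vanishing of $\tau_1$ on $\mathcal{K}_{g,1}$. First I would record that, for any fixed Magnus expansion, the total Johnson map is a genuine homomorphism into $IA(\widehat T)$ under composition of automorphisms, since $\theta\circ(\varphi\psi)\circ\theta^{-1}=(\theta\circ\varphi\circ\theta^{-1})\circ(\theta\circ\psi\circ\theta^{-1})$. Transporting the group structure of $IA(\widehat T)$ to $\Hom(H,\widehat T_2)$ via $U\mapsto U|_H-\id|_H$ turns this equality into
\[
\tau(\varphi\psi)=\tau(\varphi)+\tau(\psi)+\tau(\varphi)\circ\tau(\psi),
\]
where the last term is computed degree-by-degree by the formulae recorded in \eqref{eq:compositions}.

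Next I would extract the degree-five component, which is exactly $\tau_3$ in the present grading, to obtain $\tau_3(\varphi\psi)=\tau_3(\varphi)+\tau_3(\psi)+(\tau(\varphi)\circ\tau(\psi))_{(5)}$. It then suffices to show that the mixing term vanishes for $\varphi,\psi\in\mathcal{K}_{g,1}=\M1g[2]$. Reading off the explicit expansion of $(\tau(\varphi)\circ\tau(\psi))_{(5)}$ from \eqref{eq:compositions}, its first block of summands is a linear operator built from $\tau_2(\varphi)$ and $\tau_1(\varphi)$ applied to $\tau_1(\psi)$, while its second block is a linear operator built from $\tau_1(\varphi)$ applied to $\tau_2(\psi)$; in particular every summand carries at least one factor of $\tau_1(\varphi)$ or $\tau_1(\psi)$. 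Since $\mathcal{K}_{g,1}=\M1g[2]=\Ker(\tau_1)$ by the very definition of the Johnson filtration, both $\tau_1(\varphi)$ and $\tau_1(\psi)$ vanish, so the mixing term is zero and $\tau_3(\varphi\psi)=\tau_3(\varphi)+\tau_3(\psi)$.

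I do not expect a serious obstacle here: the argument is essentially the one-line remark preceding the statement, and the only real content is the bookkeeping in \eqref{eq:compositions} confirming that no degree-five summand is free of a $\tau_1$ factor. This is precisely the feature special to degree five: in degree six one would additionally encounter a term obtained by mixing $\tau_2(\varphi)$ with $\tau_2(\psi)$, which carries no $\tau_1$ factor and need not vanish on $\mathcal{K}_{g,1}$. Thus the same method predicts that the analogous additivity should fail for $\tau_4$, and one should not expect $\tau_m$ with $m>3$ to be a homomorphism on $\mathcal{K}_{g,1}$ in general.
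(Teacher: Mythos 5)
Your proof is correct and follows essentially the same route as the paper: the paper likewise observes that by the composition law \eqref{eq:compositions} every degree-five mixing term carries a factor of $\tau_1$, which vanishes on $\mathcal{K}_{g,1}=\M1g[2]$, so $\tau_3$ is additive there. Your closing remark about $\tau_4$ also matches the paper, which notes that $\tau_4$ fails to be a homomorphism on $\mathcal{K}_{g,1}$ precisely because of the $\tau_2$--$\tau_2$ mixing term.
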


A similar (yet obviously more complicated) approach can be taken to calculate $\tau_4$ on  $\mathcal{K}_{g,1}$.  We sketch the relevant aspects of the calculation of $\tau_4$ on a generator $(\gamma,h)$ below but first comment that 
while $\tau_4$ is no longer a homomorphism on $\mathcal{K}_{g,1}$, \eqref{eq:compositions} 
shows that it is one ``crossed'' only by $\tau_2$ terms since $\tau_1$ vanishes there.   Thus once $\tau_4$ is known for our generating set,  we can easily calculate $\tau_4$ for any product of these generators.  

Since $\tau_1(T_h)=\tau_3(T_h)=0$, the only mixing in $\tau_{\leq 4}(\gamma)$ and $\tau_{\leq 4}(T_h)$ occurs between their respective $\tau_2$ terms.  Thus, we find
\[ 
\tau_4(\gamma,h)=  {}^\gamma \tau_4(T_h)+  [\tau_2(\gamma),{}^\gamma\tau_2(T_h)]
\]
with  $\tau_2(\gamma)$  quite calculable  via computer and $\tau_2(T_h)$  given by \eqref{eq:tau2formula}.  

Now, the calculation of  $\tau_4(T_\partial)$ involves contributions from \eqref{eq:t4b0} as well as from mixing of lower degree terms.  Since $\tau_1(W)=0$ for every Whitehead move $W$ of the sequence, \eqref{eq:compositions} shows that only mixing of $\tau_2(W)$ terms need be considered,  and these have already been determined above.  Finally, 
by Proposition \ref{prop:tail}, each contribution coming from  \eqref{eq:t4b0}  is of the form
\[
 \tG_4 = \frac{1}{6^4}\big(c\otimes ([a,R_t] -6 [ Q_a,\omega ]) -a\otimes([c,R_t] -6 [ Q_c,\omega]) \big),
\]
which again should be amenable to computer calculation.

\subsection{An explicit example}\label{sec63}
 To conclude, we employ the results of the previous section to
compute the value of $\tau_3$ for (right-handed) Dehn twists on certain specific separating curves.  In particular, we focus on elements of the form $(\varphi,h)=\varphi T_h \varphi^{-1}$ with $\varphi\in\M1g$ one of the  Lickorish generators $\lambda_i$, $\mu_i$ (for  $h=1,\ldots ,g$) or $\nu_i$ (for $h=1,\ldots ,g-1$).  We refer to \cite{Morita93} for both notation and for comparison of results.  

We begin by calculating $\tau_1$ for each of Lickorish's generators.  First of all, it obvious that $\tau_1({\lambda_1})=\tau_1({\mu_1})=0$ for a surface of genus one.  By the groupoid stability property of bordered fatgraphs, this must also be the case for surfaces of higher genus $g$.  Thus, we have $\tau_1({\lambda_i})=\tau_1({\mu_j})=0$ for all $i,j=1,\ldots,g$.  Note that this differs from the lift of Morita \cite{Morita93}.  Also, one can directly use the action of $\nu_i$ on the generators $\{\mb{u}_i,\mb{v}_i\}$ of $\pi_1$ (see \cite{Morita93}) and the fatgraph Magnus expansion to determine that 
\begin{multline*}
\tau_1(\nu_i)=\frac12 (u_i+u_{i+1})\wedge v_i \wedge v_{i+1}\\
=\frac12\big(  (u_i+u_{i+1})\otimes [ v_i, v_{i+1}]+v_i\otimes[v_{i+1},u_i+u_{i+1}]+v_{i+1}\otimes[u_i+u_{i+1},v_i] \big)
\end{multline*}
which does match the lift of Morita up to sign.  

Now, as an immediate consequence of $\tau_1({\lambda_i})=\tau_1({\mu_j})=0$ and \eqref{eq:tau3recipe}, we find that $\tau_3(\lambda_i,h)=   \tau_3(\mu_j,h)= 0$, which simply reflects the fact that the twists $\lambda_i$ and $\mu_j$  commute with any standard separating Dehn twist $T_h$.  Similarly, $\tau_3(\nu_i,h)=0$ for $i\neq h$, and we next turn towards calculating 
\[
\begin{split}
\tau_3(\nu_h,h)&=[\tau_1(\nu_h),{}^{\nu_h}\tau_2(T_h)]\\
\end{split}
\]

We first note that the  action $|\nu_h|$ on $H$ is given by $u_h\mapsto u_h+v_{h+1}-v_h$ and $u_h\mapsto u_h+v_{h+1}-v_h$ while other basis elements are fixed.  If we let  $\omega_h=\sum_{i=1}^h [u_i,v_i]$ and define $\omp$ by $\omp= |\nu_h|(\omega_h)$, we see that 
\[
\omp=|\nu_h|(\omega_h)=\omega_h- [v_h,v_{h+1}],
\]
so that after some cancellations, we have
\begin{multline*}
{}^{\nu_h}\tau_2(T_h)=|\nu_h|(\sum_{i=1}^h u_i\otimes[v_i,\omega_h]-v_i\otimes[u_i,\omega_h])\\
= \sum_{i=1}^h (u_i\otimes[v_i,\omp]-v_i\otimes [u_i,\omp])
+ v_{h+1}\otimes[v_h,\omp]-v_h\otimes[v_{h+1},\omp] .
\end{multline*}

Define $\alpha=2\tau_1(\nu_h)(\omp)$, so that by 
\[
2(\tau_1(\nu_h)\otimes 1 + 1 \otimes \tau_1(\nu_h))(\omega_h)
=[u_h,[v_h,v_{h+1}]]-[v_h,[u_h+u_{h+1},v_{h+1}]],
\]
we have 
\begin{multline*}
\alpha=2(\tau_1(\nu_h)\otimes 1 + 1 \otimes \tau_1(\nu_h))(\omp)\\
=-[v_{h+1},[u_h,v_h]]-[v_h,[u_{h+1},v_{h+1}]-[v_h,[v_h,v_{h+1}]]+[v_{h+1},[v_h,v_{h+1}]].\\
\end{multline*}

Using this notation, we compute  $2\tau_1(\nu_h)\circ {}^{\nu_h}\tau_2(T_h)$:
\[
\begin{split}
&\big(  (u_h+u_{h+1})\otimes [ v_h, v_{h+1}] +v_h\otimes [v_{h+1} ,u_h+u_{h+1} ] + v_{h+1}\otimes[u_h+u_{h+1} ,v_h ]  \big)  \\
&\circ  \big( \sum_{i=1}^h ( u_i\otimes[v_i,\omp]-v_i\otimes [u_i,\omp])+v_{h+1}\otimes[v_h,\omp]-v_h\otimes[v_{h+1},\omp] \big)\\
&= \sum_{i=1}^h (u_i\otimes[v_i,\alpha]
-v_i\otimes [u_i,\alpha])\\
&+u_h\otimes [[v_h,v_{h+1}],\omp]+v_h\otimes [[v_{h+1},u_h+u_{h+1}],\omp]\\
&+v_{h+1}\otimes([v_h,\alpha]+[[v_h,v_{h+1}],\omp])
 -v_h\otimes( [v_{h+1}, \alpha]+[[v_h,v_{h+1}],\omp]) .
\end{split}
\]
 
Similarly, we  compute  ${}^{\nu_h}\tau_2(T_h)\circ 2\tau_1(\nu_h)$:
 \[
 \begin{split}
& \big( \sum_{i=1}^h (u_i\otimes[v_i,\omp]-v_i\otimes [u_i,\omp])+v_{h+1}\otimes[v_h,\omp]-v_h\otimes[v_{h+1},\omp] \big)\\
  &\circ \big(  (u_h+u_{h+1})\otimes [ v_h, v_{h+1}] +v_h\otimes [v_{h+1} ,u_h+u_{h+1} ] + v_{h+1}\otimes[u_h+u_{h+1} ,v_h ] \big)\\
  &=-(u_h+u_{h+1})\otimes [v_{h+1},[v_h,\omp]]    +v_h\otimes [v_{h+1} ,[u_h,\omp] ] \\
  &- v_{h+1}\otimes [v_h,[u_h,\omp]]+ v_{h+1}\otimes [u_h+u_{h+1}, [v_h,\omp]]\\
  &+v_h\otimes [v_{h+1} ,[v_{h+1}-v_h,\omp]] - v_{h+1}\otimes[v_h,[v_{h+1}-v_h,\omp]  ] .\\
 \end{split}
 \]
 
 Combining these, we obtain the expression
\[
\begin{split}
2\tau_3(\nu_h,h) &= \sum_{i=1}^h (u_i\otimes[v_i,\alpha] -v_i\otimes [u_i, \alpha])\\
+u_h&\otimes [v_h,[v_{h+1},\omp]]  +u_{h+1}\otimes  [v_{h+1},[v_h,\omp]]   \\
+v_h&\otimes  ( - [v_{h+1}, \alpha] -[[v_h,v_{h+1}],\omp]+ [[v_{h+1},u_h+u_{h+1}],\omp] \\
&- [v_{h+1} ,[u_h,\omp] ] -[v_{h+1} ,[v_{h+1}-v_h,\omp]] )\\
+ v_{h+1}&\otimes ( [v_h,\alpha]+[[v_h,v_{h+1}],\omp] + [v_h,[u_h,\omp]]\\
&- [u_h+u_{h+1}, [v_h,\omp]]+[v_h,[v_{h+1}-v_h,\omp]  ] ).\\
\end{split}
\]
  
\bibliographystyle{amsplain}

\end{document}